\newtheorem{thm}{Theorem}[section]
\newtheorem{lemma}[thm]{Lemma}
\newtheorem{fact}{Fact}
\newtheorem{ques}[thm]{Question}
\newtheorem{cor}[thm]{Corollary}
\newtheorem{defn}[thm]{Definition}
\newtheorem{rem}[thm]{Remark}
\newtheorem{step}{Step}
\def \N {\mathbb N}
\def \M {\mathcal M}
\def\B {\mathcal B}
\def\lk {\left}
\def\re {\right}
\numberwithin{equation}{section}
\begin{document}

	\baselineskip 14pt
	
	\title[Pinsker  $\sigma$-algebra character and mean Li-Yorke chaos]{Pinsker  $\sigma$-algebra character and mean Li-Yorke chaos}
	

	\author{Chunlin Liu}
	\address[Chunlin Liu]{School of Mathematical Sciences, University of Science and Technology of China, Hefei, Anhui, 230026, P.R. China}
	\email{lcl666@mail.ustc.edu.cn}
	
	\author{Rongzhong Xiao} 
	\address[Rongzhong Xiao]{School of Mathematical Sciences, University of Science and Technology of China, Hefei, Anhui, 230026, P.R. China}
	 \email{xiaorz@mail.ustc.edu.cn}
	 
	\author{Leiye Xu}	
	\address[Leiye Xu]{School of Mathematical Sciences, University of Science and Technology of China, Hefei, Anhui, 230026, P.R. China}
\email{leoasa@mail.ustc.edu.cn}

\subjclass[2020]{Primary: 37B05; Secondary: 37B40, 37A35.}
\keywords{Pinsker $\sigma$-algebra, Characteristic factor, Mean Li-Yorke chaos.}

 \begin{abstract}
 Let $G$ be an infinite countable discrete amenable group. For any $G$-action on a compact metric space $X$, it is proved that for any sequence $(G_n)_{n\ge 1}$ consisting of non-empty finite subsets  of $G$ with $\lim_{n\to \infty}|G_n|=\infty$, Pinsker $\sigma$-algebra is a characteristic factor for $(G_n)_{n\ge 1}$. As a consequence, for a class of $G$-topological dynamical systems, positive topological entropy	implies mean Li-Yorke chaos along a class of sequences consisting of non-empty finite subsets  of $G$.
	\end{abstract}
		\maketitle
		
	\section{Introduction}

	 Throughout this paper, $G$ is an infinite countable discrete amenable group. A {\em $G$-topological dynamical system} ($G$-tds for short) $(X, G)$ is such that  $X$ is a compact metric space endowed with the metric $d$ and $G$ acts on $X$ continuously. Let $\mathcal{B}_X $ be the Borel $\sigma$-algbra of $X$. A {\em $G$-measure preserving dynamical system}  ($G$-mps for short) $(X,\mathcal{B}_X^\mu,\mu,G)$  is such that $(X,G)$ is a $G$-tds,  $\mu$ is an invariant  Borel probability measure of $(X,G)$ and $\mathcal{B}_{X}^{\mu}$ is the completion of $\mathcal{B}_{X}$ under $\mu$.
	 
	  For a non-empty set $S$, we let $F(S)$ be the set of all non-empty finite subsets of $S$. By
	 a  probability space  $(X,\mathcal{B}_X^\mu,\mu)$, we let
	 $L^{2}(X,\B_{X}^\mu,\mu)$ the set of real-valued measurable functions $f:X\to\mathbb{R}$ with
	 $\|f\|_2=\left(\int f^2d\mu\right)^{\frac{1}{2}}<\infty.$ For a $G$-mps $(X,\mathcal{B}_X^\mu,\mu,G)$, we use the symbol $\textbf{A}(F,f)(x)$ to denote the average
	 $$\textbf{A}(F,f)(x)=\frac{1}{|F|}\sum_{g\in F}f(gx)$$
	 for any  $f:X\to\mathbb{R}$ and any $F\in F(G)$. 
	 
	 In \cite{FW96}, Furstenberg and Weiss introduced the notion of characteristic factor when they consider the
	 $L^{2}$-limit of some polynomial ergodic averages. In 2005, Host and Kra characterized the characteristic factor of non-conventional ergodic averages via so-called pro-nilfactor when they study $L^{2}$-limit of non-conventional ergodic averages (see \cite{book:HK} for more details). There is a natural question:
	 \begin{ques}\label{q1}
    Let  $(X,\mathcal{B}_X^\mu,\mu,G)$ be an ergodic $G$-mps and $(G_n)_{n\ge 1}\subset F(G)$ with $\lim_{n\to\infty}|G_n|=\infty$. Can we find the characteristic factor when we consider the $L^{2}$-convergence of ergodic averages $\textbf{A}(G_n,f)$ for any $f\in L^2(X,\mathcal{B}_X^\mu,\mu)$?
	 \end{ques}
 In this paper, we find such a characteristic factor. Let us begin with the definition of the characteristic factor.
	 \begin{defn}Let  $(X,\mathcal{B}_X^\mu,\mu,G)$ be a $G$-mps and $(G_n)_{n\ge 1}\subset F(G)$. A sub-$\sigma$-algebra $\mathcal{F}$ of $\mathcal{B}_X^\mu$ is called a characteristic factor for $(G_n)_{n\ge 1}$ if for every $f\in L^2(X,\mathcal{B}_X^\mu,\mu)$,
	 	$$\lim_{n\to\infty}\left\|\textbf{A}(G_n,f)-\textbf{A}(G_n,\mathbb{E}(f|\mathcal{F}))\right\|_2=0,$$
	 	where $\mathbb{E}(f|\mathcal{F})$ is the conditional expectation of $f$ with respect to $\mathcal{F}$.
	 \end{defn}
The following result shows that Pinsker $\sigma$-algebra suits Question \ref{q1}.
	 \begin{thm}\label{Thm-A}
	 	Let $(X,\mathcal{B}_X^\mu,\mu, G)$ be an ergodic $G$-mps, $(G_n)_{n\ge 1}\subset F(G)$ and $\mathcal{P}_\mu(G)$  be the Pinsker $\sigma$-algebra of $(X,\mathcal{B}_X^\mu,\mu, G)$. If $\lim_{n\to\infty}|G_n|=\infty$, then $\mathcal{P}_\mu(G)$ is a characteristic factor  for $(G_n)_{n\ge 1}$.
	 \end{thm}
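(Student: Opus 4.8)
The plan is to reduce the statement to a spectral/harmonic-analytic fact about the Koopman operator restricted to the orthogonal complement of the Pinsker factor. Write $L^2(X,\mathcal B_X^\mu,\mu) = L^2(\mathcal P_\mu(G)) \oplus H$, where $H$ is the orthogonal complement; since $\mathbf A(G_n,\cdot)$ is linear and $\mathbb E(\cdot\mid\mathcal P_\mu(G))$ is the orthogonal projection onto $L^2(\mathcal P_\mu(G))$, it suffices to show that $\|\mathbf A(G_n,f)\|_2 \to 0$ for every $f \in H$. The key structural input is the Rokhlin--Sinai theory of the Pinsker $\sigma$-algebra for amenable group actions: there is a measurable \emph{K-generator}, i.e.\ $H$ can be described in terms of a filtration of $\sigma$-algebras $\bigvee_{g} g\mathcal A$ whose ``remote past'' is $\mathcal P_\mu(G)$, and on $H$ the action has \emph{countable Lebesgue spectrum}. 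The latter is the crucial point: by classical results extended to countable amenable groups, positive-entropy systems have a factor of the Koopman representation on $H$ that is unitarily equivalent to a countable multiple of the left regular representation $\lambda_G$ on $\ell^2(G)$.

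Granting this, the problem becomes: show that $\|\frac{1}{|G_n|}\sum_{g\in G_n}\lambda_G(g)v\|_{\ell^2(G)} \to 0$ for every $v\in\ell^2(G)$ whenever $|G_n|\to\infty$. First I would verify this for $v = \delta_e$ (the Dirac mass at the identity): then $\frac{1}{|G_n|}\sum_{g\in G_n}\lambda_G(g)\delta_e = \frac{1}{|G_n|}\mathbbm 1_{G_n}$ (up to inversion of the indexing set), which has $\ell^2$-norm exactly $|G_n|^{-1/2}\to 0$. By $G$-equivariance and density of finitely supported vectors, this extends to all of $\ell^2(G)$: for $v = \sum_{h\in F}c_h\delta_h$ with $F$ finite, $\|\mathbf A(G_n,v)\|_2 \le \sum_{h\in F}|c_h|\,\|\mathbf A(G_n,\delta_h)\|_2 = \big(\sum_h|c_h|\big)|G_n|^{-1/2}\to 0$, and a standard $\varepsilon/3$ approximation argument closes the general case. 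Transporting back through the unitary equivalence, $\|\mathbf A(G_n,f)\|_2\to 0$ for all $f$ in the countable-Lebesgue-spectrum part of $H$.

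The main obstacle is justifying that the \emph{entire} orthogonal complement $H$ carries countable Lebesgue spectrum, rather than just containing a Lebesgue component: for single transformations this is the Rokhlin--Sinai theorem (Pinsker factor complement is Lebesgue with countable multiplicity for K-systems, and in general one passes to the Pinsker factor so that the fiber is K), but for a general ergodic action of a countable amenable group $G$ one needs the corresponding structure theory — existence of a relatively K (``relatively perfect'') extension over the Pinsker factor, together with the fact that a relatively K amenable action has relative countable Lebesgue spectrum. I would invoke the amenable Rokhlin--Sinai / Dooley--Golodets type results here; this is where one must be careful, since ``Lebesgue spectrum'' for non-abelian $G$ means unitary equivalence with a multiple of $\lambda_G$, and one needs the averaging computation above to be insensitive to the multiplicity (it is, since the estimate $|G_n|^{-1/2}$ is uniform). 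A secondary technical point is handling the passage between $\mathbf A(G_n,f)$ and $\mathbf A(G_n,\mathbb E(f\mid\mathcal P_\mu(G)))$ cleanly: this is immediate once the $H$-statement is in hand, because $f - \mathbb E(f\mid\mathcal P_\mu(G))\in H$ and $\mathbf A(G_n,\cdot)$ is linear, so $\mathbf A(G_n,f) - \mathbf A(G_n,\mathbb E(f\mid\mathcal P_\mu(G))) = \mathbf A(G_n, f-\mathbb E(f\mid\mathcal P_\mu(G)))$ has norm tending to $0$.
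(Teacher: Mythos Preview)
Your approach is correct and genuinely different from the paper's. You reduce to showing $\|\mathbf A(G_n,h)\|_2\to 0$ for $h\in L^2(X)\ominus L^2(\mathcal P_\mu(G))$ and then invoke the Dooley--Golodets spectral theorem (the Koopman representation on that complement is a multiple of $\lambda_G$), after which the $\ell^2(G)$ computation $\|\frac{1}{|G_n|}\mathbbm 1_{G_n}\|_2=|G_n|^{-1/2}$ finishes the job. The paper instead works directly with the Rudolph--Weiss/Danilenko ``asymptotic independence over the Pinsker $\sigma$-algebra'' (Theorem~\ref{6}): it first proves the estimate for indicators $1_A$ with constant conditional probability on a Pinsker set, using an entropy continuity lemma to turn near-additivity of $H_{\mu_y}$ into near-independence of $g^{-1}A$, $h^{-1}A$; then it approximates general $f$ by such indicators; finally it shows (Lemma~\ref{lem-2}) that any large $G_n$ can be chopped into $K$-separated blocks of fixed size plus a small remainder, and applies the previous estimate blockwise.

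Both routes ultimately rest on the same structural input --- the Rudolph--Weiss theorem is precisely what Dooley--Golodets use to get Lebesgue spectrum --- but they package it differently. Your argument is shorter and more conceptual once the spectral theorem is on the table, and it makes transparent why only $|G_n|\to\infty$ matters (no F{\o}lner condition, no arithmetic of $G_n$). The paper's argument is more self-contained: it never leaves the entropy/conditional-measure world, avoids citing the full spectral classification, and the block-decomposition lemma is reusable for other purposes. You were right to flag that the \emph{relative} Lebesgue-spectrum statement (for $L^2(X)\ominus L^2(\mathcal P_\mu(G))$ rather than $L^2_0$ of a K-system) is the point requiring care; it does follow from the Dooley--Golodets argument, since their proof already works relative to the Pinsker algebra, but an explicit reference or a line explaining the reduction would strengthen the write-up.
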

A  $G$-mps is a  Kolmogorov system if its Pinsker $\sigma$-algebra is trivial. The following is immediately from  Theorem \ref{Thm-A}.
\begin{cor}Let $(X,\mathcal{B}_X^\mu,\mu, G)$ be a Kolmogorov $G$-mps and  $(G_n)_{n\ge 1}\subset F(G)$. If $\lim_{n\to\infty}|G_n|=\infty$ then for all $f\in L^2(X,\mathcal{B}_X^\mu,\mu)$,
	$$\lim_{n\to\infty}\left\|\textbf{A}(G_n,f)-\int fd\mu\right\|_2=0.$$
\end{cor}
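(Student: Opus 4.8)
The plan is to derive the statement directly from Theorem \ref{Thm-A}, so the only substantive points are to check that its hypothesis is met and then to unwind the definition of a character in the case where the Pinsker $\sigma$-algebra is trivial. First I would observe that a Kolmogorov $G$-mps is automatically ergodic: the sub-$\sigma$-algebra $\mathcal{I}_\mu\subset\mathcal{B}_X^\mu$ of $G$-invariant sets is $G$-invariant, and $G$ acts trivially on the associated factor; since $G$ is an infinite amenable group, the trivial action has zero entropy (for any finite partition $\alpha$ one has $H_\mu(\bigvee_{g\in F_n}g\alpha)=H_\mu(\alpha)$, so the normalized quantity tends to $0$ along a F\o lner sequence), whence $\mathcal{I}_\mu\subseteq\mathcal{P}_\mu(G)$. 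As $\mathcal{P}_\mu(G)$ is trivial modulo $\mu$, so is $\mathcal{I}_\mu$, which is exactly ergodicity. Therefore Theorem \ref{Thm-A} applies and yields that $\mathcal{P}_\mu(G)$ is a character for $(G_n)_{n\ge 1}$.

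Next I would use that $\mathcal{P}_\mu(G)$ being trivial forces $\mathbb{E}(f\mid\mathcal{P}_\mu(G))$ to equal the $\mu$-a.e.\ constant function $\int f\,d\mu$ for every $f\in L^2(X,\mathcal{B}_X^\mu,\mu)$. Consequently, since the average of a constant function over any $F\in F(G)$ is that same constant, $\textbf{A}(G_n,\mathbb{E}(f\mid\mathcal{P}_\mu(G)))=\int f\,d\mu$ in $L^2$ for every $n\ge 1$. Substituting this into the defining property of a character,
$$\lim_{n\to\infty}\big\|\textbf{A}(G_n,f)-\textbf{A}(G_n,\mathbb{E}(f\mid\mathcal{P}_\mu(G)))\big\|_2=0,$$
gives precisely $\lim_{n\to\infty}\|\textbf{A}(G_n,f)-\int f\,d\mu\|_2=0$, which is the assertion.

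At this level there is essentially no obstacle: all the analytic content is already packaged in Theorem \ref{Thm-A}, and the corollary amounts to recognizing that "Kolmogorov" is the instance "$\mathcal{P}_\mu(G)$ equals the trivial $\sigma$-algebra" and that conditional expectation onto the trivial $\sigma$-algebra is integration against $\mu$. The one place requiring a word of care is the ergodicity verification above, because Theorem \ref{Thm-A} is stated for ergodic systems; this is why I would record that step explicitly rather than leave it implicit.
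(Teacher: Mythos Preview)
Your proof is correct and follows the same route as the paper, which simply states that the corollary is immediate from Theorem \ref{Thm-A}. Your explicit verification that a Kolmogorov system is ergodic (via $\mathcal{I}_\mu\subseteq\mathcal{P}_\mu(G)$) is a nice point of care, since Theorem \ref{Thm-A} is stated under an ergodicity hypothesis that the paper leaves implicit here.
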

As an application of Theorem \ref{Thm-A}, we study the mean Li-Yorke chaos.  In the topological dynamics, a fundamental question is to find the relationship between positive topological entropy and chaos. Using the ergodic theoretic method,
	Blanchard, Glasner, Kolyada and Maass \cite{BGKM} proved that for a $\mathbb{Z}$-tds
	positive topological entropy implies Li-Yorke chaos.
	 Downarowicz \cite{D} observed that for a $\mathbb{Z}$-tds,  mean Li-Yorke chaos
	is equivalent to so-called DC2 chaos
	and proved that positive topological entropy implies mean Li-Yorke chaos. In
	\cite{HLY14}, 
	Huang, Li and Ye provided a different approach and
	showed that positive topological entropy implies a
	multivariant version of mean Li-Yorke chaos. By using combinatorial methods, Kerr and Li proved that positive topological entropy implies Li-Yorke chaos
	for amenable
	group actions in \cite{KL07}, and for sofic group actions in \cite{KL13}. For more related topics on chaotic behaviours
	we refer to \cite{GJ,HLY-Arxiv,HJ,HXY,HY02,I,LR, LY, WG}.

With the deepening of research, scholars began to pay attention to the sequence version of the Li-Yorke chaos. Huang, Li and Ye \cite{HLY-Arxiv} showed for any $G$-tds, positive topological entropy implies the Li-Yorke chaos along any sequence with pairwise distinct elements. Li and Qiao \cite{LQ} shown that for a $\mathbb{Z}$-tds and a ``good sequence'' of $\N$, positive topological entropy implies mean Li-Yorke chaos along this good sequence.
However, the condition ``good sequence'' is not valid for all sequences even in $\mathbb{N}$. In this paper, by argument of weak convergence, we can omit  the condition.  An advantage of our method is to avoid using some ergodic theorems when one studies sequence version of mean Li-Yorke chaos.
	 	
To be precise, let $(X,G)$ be a $G$-tds.	For $(G_n)_{n\ge 1}\subset F(G)$ and $\delta>0$,
	a pair $(x_1,x_2)\in X \times X$ is called a $((G_n)_{n\ge 1},\delta)$-mean scrambled pair if 
	\begin{equation*}\label{100}
		\liminf_{n\rightarrow \infty}\frac{1}{|G_n|}\sum_{g\in G_n}d(gx_1,gx_2)=0
	\end{equation*}
	and 
	\begin{equation*}\label{200}
		\limsup_{n\rightarrow \infty}\frac{1}{|G_n|}\sum_{g\in G_n}d(gx_1,gx_2) >\delta.
	\end{equation*}
	A subset $K$ of $X$ with at least two points is called a $((G_n)_{n\ge 1},\delta)$-mean scrambled subset of $X$ if every two  distinct points $x_1,x_2\in K$ form a  $((G_n)_{n\ge 1},\delta)$-mean scrambled pair. Conventionally,  $(X,G)$ is said to be   $((G_n)_{n\ge 1},\delta)$-mean Li-Yorke chaotic if there exists an uncountable $((G_n)_{n\ge 1},\delta)$-mean scrambled subset of $X$.

	Let $(X,G)$ be a $G$-tds. For $(G_n)_{n\ge 1}\subset F(G)$ and $\delta>0$, put 
$$W_{(G_n)_{n\ge 1},\delta}=\{(x_1,x_2)\in X\times X:\limsup_{n\rightarrow \infty}\frac{1}{|G_n|}\sum_{g\in G_n}d(gx_1,gx_2) >\delta\}.$$
By the arguement of the weak convergence, we obtain the following theorem. 
\begin{thm}\label{thm1}
	Let $(X,G)$ be a $G$-tds, $(G_n)_{n\ge 1}\subset F(G)$ and $\mu$  be an ergodic measure of $(X,G)$. Assume $h_\mu(G)>0$ and $\lim_{n\to\infty}|G_n|=\infty$. Let  $\mu=\int \mu_yd\mu(y)$ be the disintegration of $\mu$ with respect to $\mathcal{P}_{\mu}(G)$. Then
	for $\mu$-a.e. $x\in X$, there is $\delta_x>0$ such that  $\mu_x\times\mu_x(W_{(G_n)_{n\ge 1},\delta_x})=1$.
\end{thm}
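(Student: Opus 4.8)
The plan is to lift the problem to the product system $(X\times X,G)$ equipped with the relatively independent self-joining of $\mu$ over its Pinsker factor, and to run Theorem~\ref{Thm-A} there. Write $\pi\colon (X,\mathcal{B}_X^\mu,\mu,G)\to(Y,\mathcal{B}_Y^\nu,\nu,G)$ for the factor map onto the Pinsker factor, write $\mu=\int_Y\mu_y\,d\nu(y)$ for the disintegration over $Y$ (so the conditional measures in the statement satisfy $\mu_x=\mu_{\pi(x)}$ for $\mu$-a.e.\ $x$), and set
$$\lambda:=\int_Y\mu_y\times\mu_y\,d\nu(y),$$
a $G$-invariant Borel probability measure on the $G$-tds $(X\times X,G)$. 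Let $\Pi\subseteq\mathcal{B}_{X\times X}^\lambda$ be the sub-$\sigma$-algebra generated by $\pi\circ\mathrm{pr}_1$; since $\lambda$ is concentrated on $\{(x_1,x_2):\pi(x_1)=\pi(x_2)\}$, $\Pi$ is (mod $\lambda$) the pull-back of the common factor $Y$, has zero entropy, hence $\Pi\subseteq\mathcal{P}_\lambda(X\times X,G)$. Put $\Phi(y):=\int d(z_1,z_2)\,d(\mu_y\times\mu_y)(z_1,z_2)$, a Borel function on $Y$ with $0\le\Phi\le\mathrm{diam}\,X$. Two preliminary observations: first, $W_{(G_n)_{n\ge1},\delta}$ is a Borel set, being a super-level set of the Borel function $(x_1,x_2)\mapsto\limsup_n\textbf{A}(G_n,d)(x_1,x_2)$; second, because $h_\mu(G)>0$ and $\mu$ is ergodic, the set of $y$ for which $\mu_y$ is a point mass is $G$-invariant (by $g_*\mu_y=\mu_{gy}$) and cannot be co-null (else $\mathcal{P}_\mu(G)=\mathcal{B}_X^\mu$ and $h_\mu(G)=0$), so it is null, and refining the argument to the location of a maximal atom shows $\mu_y$ is in fact non-atomic for $\nu$-a.e.\ $y$; in particular $\Phi>0$ $\nu$-a.e.

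The technical heart is the following assertion, which I would isolate as a lemma: $(X\times X,\mathcal{B}_{X\times X}^\lambda,\lambda,G)$ is \emph{ergodic} and $\mathcal{P}_\lambda(X\times X,G)=\Pi$. Since $Y$ is the Pinsker factor of $(X,\mu,G)$, the addition formula for conditional entropy forces the relative Pinsker of $X$ over $Y$ to equal $Y$, i.e.\ $X\to Y$ is a relatively Kolmogorov extension; hence it is relatively weakly mixing, so $\lambda$ is ergodic (the base being ergodic), and the relative product $X\times_Y X\to Y$ is again relatively Kolmogorov, whence — using $h_\nu(Y)=0$, that $\Pi\subseteq\mathcal{P}_\lambda(X\times X,G)$, and once more the addition formula — one gets $\mathcal{P}_\lambda(X\times X,G)=\Pi$. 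All of these facts are classical for $\mathbb{Z}$-actions and have counterparts for countable amenable $G$ (Furstenberg--Zimmer structure theory, Abramov--Rokhlin type addition formulas, and permanence of relative Kolmogorov-ness under relative products), which I would cite rather than reprove. I expect establishing this lemma to be the main obstacle.

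Granting the lemma, apply Theorem~\ref{Thm-A} to the ergodic $G$-mps $(X\times X,\mathcal{B}_{X\times X}^\lambda,\lambda,G)$ and to $d\in C(X\times X)\subseteq L^2(\lambda)$. By the lemma $\mathbb{E}_\lambda\big(d\mid\mathcal{P}_\lambda(X\times X,G)\big)=\mathbb{E}_\lambda(d\mid\Pi)$, and since a $\Pi$-measurable function is a function of $\pi\circ\mathrm{pr}_1$ one computes $\mathbb{E}_\lambda(d\mid\Pi)(x_1,x_2)=\Phi(\pi(x_1))$; moreover $\textbf{A}(G_n,\Phi\circ\pi\circ\mathrm{pr}_1)=\big(\textbf{A}(G_n,\Phi)\big)\circ\pi\circ\mathrm{pr}_1$ by $G$-equivariance of $\pi$. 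Hence Theorem~\ref{Thm-A} gives $\big\|\textbf{A}(G_n,d)-\big(\textbf{A}(G_n,\Phi)\big)\circ\pi\circ\mathrm{pr}_1\big\|_{L^2(\lambda)}\to0$. Passing to a subsequence $(n_k)$ along which the integrand tends to $0$ $\lambda$-a.e.\ and putting $\psi:=\limsup_k\textbf{A}(G_{n_k},\Phi)$ on $Y$, we obtain, for $\lambda$-a.e.\ $(x_1,x_2)$,
$$\limsup_{n\to\infty}\frac{1}{|G_n|}\sum_{g\in G_n}d(gx_1,gx_2)\ \ge\ \limsup_{k\to\infty}\textbf{A}(G_{n_k},d)(x_1,x_2)\ =\ \psi(\pi(x_1)).$$
This is the point where the ``weak convergence'' viewpoint replaces a pointwise ergodic theorem, which is unavailable because $(G_n)_{n\ge1}$ need not be a F\o lner sequence.

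It remains to show $\psi>0$ $\nu$-a.e., by a short maximal/Borel--Cantelli argument. Fix $\eta>0$, put $A_\eta:=\{\Phi\ge\eta\}$ and $\beta_\eta:=\nu(Y\setminus A_\eta)$; since $\Phi>0$ $\nu$-a.e., $\beta_\eta\downarrow0$ as $\eta\downarrow0$. By $G$-invariance of $\nu$, $\int_Y\frac{1}{|G_{n_k}|}\big|\{g\in G_{n_k}:gy\notin A_\eta\}\big|\,d\nu(y)=\nu(Y\setminus A_\eta)=\beta_\eta$, so Markov's inequality gives $\nu(E_{k,\eta})\ge1-2\beta_\eta$ for $E_{k,\eta}:=\{y:\ \big|\{g\in G_{n_k}:gy\in A_\eta\}\big|\ge\tfrac12|G_{n_k}|\}$, and on $E_{k,\eta}$ one has $\textbf{A}(G_{n_k},\Phi)\ge\eta/2$. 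Hence $\{\psi\ge\eta/2\}\supseteq\limsup_kE_{k,\eta}$ and $\nu(\limsup_kE_{k,\eta})=\lim_N\nu\big(\bigcup_{k\ge N}E_{k,\eta}\big)\ge1-2\beta_\eta$; letting $\eta\downarrow0$ yields $\nu(\{\psi>0\})=1$. Finally, disintegrating the displayed $\lambda$-a.e.\ inequality over $\nu$ (and using that $\mu_y$ lives on $\pi^{-1}(y)$), for $\nu$-a.e.\ $y$ and $(\mu_y\times\mu_y)$-a.e.\ $(x_1,x_2)$ we get $\limsup_n\frac{1}{|G_n|}\sum_{g\in G_n}d(gx_1,gx_2)\ge\psi(y)>0$, i.e.\ $(\mu_y\times\mu_y)\big(W_{(G_n)_{n\ge1},\psi(y)/2}\big)=1$. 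Since $\mu_x=\mu_{\pi(x)}$ for $\mu$-a.e.\ $x$, taking $\delta_x:=\psi(\pi(x))/2>0$ completes the proof.
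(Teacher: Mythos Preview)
Your proof is correct and follows the same overall strategy as the paper: lift to $(X\times X,\lambda)$ with $\lambda=\mu\times_{\mathcal{P}_\mu(G)}\mu$, use that $\lambda$ is ergodic and that $\mathcal{P}_\lambda(G)=\pi^{-1}(\mathcal{P}_\mu(G))$, and apply Theorem~\ref{Thm-A} to $d\in L^2(\lambda)$. The paper cites these two structural facts from the literature (its Lemma~\ref{3} and Lemma~\ref{7}, due to Wang--Zhang and Danilenko) rather than sketching them via the relative-Kolmogorov route you outline, but the content is the same.

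The only real difference is the endgame. The paper extracts a \emph{weak} limit $P(h)$ of $\textbf{A}(G_{n_k},d)$ by weak compactness in $L^2(\lambda)$, shows (Lemma~\ref{lemmaA}) that $P(h)$ is $\mathcal{P}_\lambda(G)$-measurable, hence fiberwise constant, and then uses a Fatou-type lemma for weak limits (Lemma~\ref{lemma1}) to get $\limsup_k\textbf{A}(G_{n_k},d)\ge P(h)>0$ $\lambda$-a.e. You are more direct: you compute $\mathbb{E}_\lambda(d\mid\mathcal{P}_\lambda)=\Phi\circ\pi\circ\mathrm{pr}_1$ explicitly, pass from the $L^2$-convergence in Theorem~\ref{Thm-A} to an a.e.\ subsequence, and run the Markov/Borel--Cantelli argument for $\psi>0$ directly on the base $Y$. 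Your positivity argument is in fact the same computation that proves the paper's Lemma~\ref{lemma1}(2), just applied to $\Phi$ on $(Y,\nu)$ instead of to $d$ on $(X\times X,\lambda)$. So your version sidesteps the weak-convergence packaging while using identical ingredients; neither approach needs a pointwise ergodic theorem, and in both the positivity of $\delta_x$ ultimately rests on the non-atomicity of $\mu_y$.
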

Let $(X,G)$ be a $G$-tds. For  $(G_n)_{n\ge 1}\subset F(G)$ and $x\in X$, put 
$$W_{S,(G_n)_{n\ge 1}}(x)=\{y\in X:\lim_{n\rightarrow \infty}\frac{1}{|G_n|}\sum_{g\in G_n}d(gx,gy)=0\}.$$ 
By using Theorem \ref{thm1}, we can prove the following result.
\begin{thm}\label{thm2}
Under the assumption in Theorem \ref{thm1}, if 
\begin{align}\label{Condition}\overline{\text{supp}(\mu_x)\cap W_{S,(G_n)_{n\ge 1}}(x) }=\text{supp}(\mu_x)\end{align}
for $\mu$-a.e. $x\in X$, then for $\mu$-a.e. $x\in X$
there are $\delta_x>0$ and a dense Mycielski\footnote{For definition, see section 2.2.}  $((G_n)_{n\ge 1},\delta_x)$-mean scrambled subset $K_x$ of $supp(\mu_x)$.  \end{thm}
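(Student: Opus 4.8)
The plan is to build, for $\mu$-a.e.\ $x$, a dense $G_\delta$ subset of $\mathrm{supp}(\mu_x)\times\mathrm{supp}(\mu_x)$ consisting (off the diagonal) of $((G_n)_{n\ge 1},\delta)$-mean scrambled pairs for a suitable $\delta>0$, and then invoke the Kuratowski--Mycielski theorem. Write $\phi_n(x_1,x_2)=\frac{1}{|G_n|}\sum_{g\in G_n}d(gx_1,gx_2)$; as $d$ and the action are continuous and $G_n$ is finite, every $\phi_n\colon X\times X\to[0,\infty)$ is continuous. Fix a $\mu$-conull set of $x$ for which both the conclusion of Theorem~\ref{thm1} holds, furnishing $\delta_x>0$ with $\mu_x\times\mu_x(W_{(G_n)_{n\ge 1},\delta_x})=1$, and \eqref{Condition} holds; set $Y_x=\mathrm{supp}(\mu_x)$. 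Because $W_{(G_n)_{n\ge 1},\delta_x}$ misses the diagonal (where $\phi_n\equiv 0$), Theorem~\ref{thm1} forces $\mu_x\times\mu_x(\Delta)=0$, so $\mu_x$ is non-atomic; hence $Y_x$ has no isolated points and is a perfect compact metric space, and so is $Y_x\times Y_x$.

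Next I produce two dense $G_\delta$ subsets of $Y_x\times Y_x$. Let
\[
R^{\mathrm{sep}}_x=\{(x_1,x_2)\in Y_x\times Y_x:\ \textstyle\limsup_{n\to\infty}\phi_n(x_1,x_2)\ge\delta_x\}.
\]
Writing $\limsup_n\phi_n\ge\delta_x$ as $\bigcap_{k\ge1}\bigcap_{N\ge1}\bigcup_{n\ge N}\{\phi_n>\delta_x-1/k\}$ shows $R^{\mathrm{sep}}_x$ is $G_\delta$ in $Y_x\times Y_x$ by continuity of the $\phi_n$. Since $R^{\mathrm{sep}}_x\supseteq W_{(G_n)_{n\ge1},\delta_x}\cap(Y_x\times Y_x)$ and $\mathrm{supp}(\mu_x\times\mu_x)=Y_x\times Y_x$, Theorem~\ref{thm1} gives $\mu_x\times\mu_x(R^{\mathrm{sep}}_x)=1$, so $R^{\mathrm{sep}}_x$ is dense in $Y_x\times Y_x$. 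Next let
\[
R^{\mathrm{pr}}_x=\{(x_1,x_2)\in Y_x\times Y_x:\ \textstyle\liminf_{n\to\infty}\phi_n(x_1,x_2)=0\},
\]
which equals $\bigcap_{k\ge1}\bigcap_{N\ge1}\bigcup_{n\ge N}\{\phi_n<1/k\}$ and is likewise $G_\delta$ in $Y_x\times Y_x$. For its density I use \eqref{Condition}: put $D_x=Y_x\cap W_{S,(G_n)_{n\ge1}}(x)$, which is dense in $Y_x$ by \eqref{Condition}. For $y_1,y_2\in D_x$ the triangle inequality gives $\phi_n(y_1,y_2)\le\phi_n(y_1,x)+\phi_n(x,y_2)\to 0$, so $D_x\times D_x\subseteq R^{\mathrm{pr}}_x$; as $D_x\times D_x$ is dense in $Y_x\times Y_x$, so is $R^{\mathrm{pr}}_x$.

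By the Baire category theorem, $R_x:=R^{\mathrm{sep}}_x\cap R^{\mathrm{pr}}_x$ is a dense $G_\delta$ subset of the perfect Polish space $Y_x\times Y_x$; it is symmetric since each $\phi_n$ is. Applying the Kuratowski--Mycielski theorem (cf.\ Section~2.2) to the perfect Polish space $Y_x$ and the dense $G_\delta$ symmetric relation $R_x$ yields a dense Mycielski set $K_x\subseteq Y_x=\mathrm{supp}(\mu_x)$ with $(x_1,x_2)\in R_x$ for all distinct $x_1,x_2\in K_x$. Each such pair satisfies $\liminf_n\phi_n(x_1,x_2)=0$ and $\limsup_n\phi_n(x_1,x_2)\ge\delta_x>\delta_x/2$, hence is a $((G_n)_{n\ge1},\delta_x/2)$-mean scrambled pair; thus $K_x$ is a dense Mycielski $((G_n)_{n\ge1},\delta_x/2)$-mean scrambled subset of $\mathrm{supp}(\mu_x)$, which proves the theorem (with the constant $\delta_x$ replaced by $\delta_x/2$).

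The only non-formal point is the density of $R^{\mathrm{pr}}_x$, which is exactly where \eqref{Condition} is needed: the key observation is that being mean-asymptotic to $x$ is transitive through $x$, so a dense-in-$Y_x$ supply of such points produces the dense set of proximal pairs $D_x\times D_x$. Everything else is the standard Baire-category/Kuratowski--Mycielski machinery together with the non-atomicity of $\mu_x$ extracted from Theorem~\ref{thm1}, the latter being what makes $\mathrm{supp}(\mu_x)$ perfect.
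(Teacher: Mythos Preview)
Your proof is correct and follows essentially the same approach as the paper: produce two dense $G_\delta$ subsets of $\mathrm{supp}(\mu_x)\times\mathrm{supp}(\mu_x)$---one for the separation condition via Theorem~\ref{thm1} and one for mean-proximality via \eqref{Condition} and the triangle inequality through $x$---then intersect and apply Mycielski's theorem. Your derivation of the non-atomicity of $\mu_x$ directly from $\mu_x\times\mu_x(\Delta)=0$ (rather than citing Lemma~\ref{3}), and your use of the weak inequality $\limsup\ge\delta_x$ to secure the $G_\delta$ property (at the cost of halving the constant), are minor technical refinements over the paper's terser presentation, not a different route.
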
 
Note that \eqref{Condition} is satisfied for all $\mathbb{Z}$-tds with positive topological entropy and any sequence $(G_n)_{n\ge 1}\subset F(\N)$ with $\lim_{n\to\infty}|G_n|=\infty$.

Moreover, we introduce a class of $G$-tds and a class of sequences consisting of non-empty finite subsets  of $G$ which satisfy \eqref{Condition}. By a group $G$, a subset $\Phi$ of $G$ is said to be an algebraic past of $G$ if \begin{itemize}
	\item[(1)]$\Phi \cap {\Phi}^{-1}=\emptyset$;
	\item[(2)]$\Phi \cup {\Phi}^{-1} \cup \{e_G\}=G$;
	\item[(3)]$\Phi \cdot \Phi \subset \Phi$.
\end{itemize}
The group $G$ is left-orderable if there exists a linear ordering in $G$ which is invariant under left translation. The group $G$ is left-orderable if and only if there exists an algebraic past $\Phi$ in $G$ \cite{S}. Indeed, one can obtain the desired linear order based on $\Phi$ as follows: $g_1< g_2$ if ${g_2}^{-1}g_1\in \Phi$. An  subset $S$ of $G$ is said to be an infinite subset of $G$ with respect to $\Phi$ if $|S|=\infty$ and
$|\{g<h:g\in S\}|<\infty$
for all $h\in G$.

Let  $G$ be a left-orderable amenable group, $\Phi$  be an  algebraic past of  $G$ with $g\Phi g^{-1}\subset\Phi$ for all $g\in G$, $(X,G)$ be a $G$-tds, $\mu$  be an ergodic measure of $(X,G)$ with $h_\mu(G)>0$, $S$  be an infinite subset of $G$ with respect to $\Phi$. Under the previous assumptions, Huang and Jin \cite[Claim 3 in Page 9]{HJ} prove that condition \eqref{Condition} holds for $(G_n)_{n\ge1}\subset F(S)$ with $\lim_{n\to\infty}|G_n|=\infty$. 
Then the following is immediately from  Theorem \ref{thm2}.
\begin{thm}\label{30}
	Let  $G$ be an infinite countable discrete left-orderable amenable group, $\Phi$  be an  algebraic past of  $G$ with $g\Phi g^{-1}\subset\Phi$ for all $g\in G$, $(X,G)$ be a $G$-tds, $\mu$  be an ergodic measure of $(X,G)$ with $h_\mu(G)>0$, $S$  be an infinite subset of $G$ with respect to $\Phi$. Assume $(G_n)_{n\ge 1}\subset F(S)$ satisfies $\lim_{n\to\infty}|G_n|=\infty$. Let  $\mu=\int \mu_yd\mu(y)$ be the disintegration of $\mu$ with respect to $\mathcal{P}_{\mu}(G)$.  Then
	for $\mu$-a.e. $x\in X$, there is $\delta_x>0$ and a dense Mycielski  $((G_n)_{n\ge 1},\delta_x)$-mean scrambled subset $K_x$ of $supp(\mu_x)$.
\end{thm}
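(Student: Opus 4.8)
The plan is to obtain this statement as a direct consequence of Theorem~\ref{thm2}: the only thing to do is to check that, under the hypotheses listed here, all the hypotheses of Theorem~\ref{thm2} are in force, the nontrivial one being condition~\eqref{Condition}, which is supplied by the cited result of Huang and Jin.

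First I would dispatch the hypotheses inherited from Theorem~\ref{thm1}. A left-orderable amenable group is in particular amenable, so $(X,G)$ is a $G$-tds over the required class of groups. Since $S\subset G$ we have $F(S)\subset F(G)$, hence $(G_n)_{n\ge 1}\subset F(G)$, and $\lim_{n\to\infty}|G_n|=\infty$ is assumed outright. The measure $\mu$ is ergodic with $h_\mu(G)>0$ by hypothesis, and the disintegration $\mu=\int\mu_y\,d\mu(y)$ is taken with respect to $\mathcal{P}_\mu(G)$, exactly as in Theorem~\ref{thm1}. Thus the hypotheses of Theorem~\ref{thm1} all hold. Next, to invoke Theorem~\ref{thm2} I must verify condition~\eqref{Condition}, namely $\overline{\text{supp}(\mu_x)\cap W_{S,(G_n)_{n\ge 1}}(x)}=\text{supp}(\mu_x)$ for $\mu$-a.e.\ $x$. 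Here I would simply observe that the standing assumptions of the present theorem — $G$ left-orderable amenable, $\Phi$ an algebraic past of $G$ with $g\Phi g^{-1}\subset\Phi$ for all $g\in G$, $(X,G)$ a $G$-tds, $\mu$ an ergodic measure with $h_\mu(G)>0$, $S$ an infinite subset of $G$ with respect to $\Phi$, and $(G_n)_{n\ge 1}\subset F(S)$ with $\lim_{n\to\infty}|G_n|=\infty$ — coincide verbatim with the hypotheses under which Huang and Jin \cite[Claim 3 in Page 9]{HJ} establish precisely \eqref{Condition}. Hence \eqref{Condition} holds for $\mu$-a.e.\ $x\in X$.

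With both families of hypotheses verified, Theorem~\ref{thm2} applies and yields, for $\mu$-a.e.\ $x\in X$, a number $\delta_x>0$ and a dense Mycielski $((G_n)_{n\ge 1},\delta_x)$-mean scrambled subset $K_x$ of $\text{supp}(\mu_x)$, which is the assertion. There is no genuine obstacle at this stage: the entire dynamical content resides in Theorem~\ref{thm2} (which itself rests on Theorem~\ref{thm1}, and ultimately on the character property of the Pinsker $\sigma$-algebra, Theorem~\ref{Thm-A}) together with the Huang--Jin estimate for \eqref{Condition}; the present proof is purely a matter of matching hypotheses. The one point deserving a line of care is to confirm that the ordering data in \cite{HJ} (the algebraic past $\Phi$ and the notion of an infinite subset with respect to $\Phi$) are used there in the same way as stated above, so that \eqref{Condition} is obtained for \emph{every} sequence $(G_n)_{n\ge 1}\subset F(S)$ with $\lim_{n\to\infty}|G_n|=\infty$, and not merely for some distinguished Følner-type sequence.
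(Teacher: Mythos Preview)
Your proposal is correct and matches the paper's own treatment exactly: the paper states that Huang and Jin \cite[Claim 3 in Page 9]{HJ} establish condition~\eqref{Condition} under precisely these hypotheses, and then declares the theorem ``immediately from Theorem~\ref{thm2}.'' Your careful check that the remaining hypotheses of Theorem~\ref{thm2} (and hence Theorem~\ref{thm1}) are satisfied is appropriate, and your closing caveat about the scope of the Huang--Jin claim is a reasonable point of due diligence but does not indicate any gap.
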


\begin{rem}
	a. After finishing the article and submitting it to a journal, the referee from the journal told us that Theorem \ref{Thm-A} is a direct consequence of \cite[Theorem 5.4]{DG}. But our method is different from one of \cite{DG}. The key points of our method are definition of measure-theoretic entropy and Theorem \ref{6}. Based on the difference and completeness of this article, we still retain the proof of Theorem \ref{Thm-A}.
	
	b. The results of this article on mean Li-Yorke chaos, including Theorem \ref{thm1},\ref{thm2} and \ref{30}, are new. And Theorem \ref{thm2} is new even when $G=\mathbb{Z}$.
\end{rem}
	
	This paper is organized as follows. In Section 2, we review some notions and required results. In Section 3, we prove Theorem \ref{Thm-A}. In Section 4, we prove Theorem \ref{thm1} and Theorem \ref{thm2}. 
	
	\section{Preliminaries}
	
	In this section we recall some basic notions and results of $G$-tds. 
	
	\subsection{F\o lner sequence and entropy}
Let $G$ be a countable, discrete, infinite, amenable group. A sequence $\left\{F_{n}\right\}_{n=1}^{+\infty}$ of non-empty finite subsets of $G$ is called a F\o lner sequence if for every $g \in G$,
$$
\lim _{n \rightarrow+\infty} \frac{\left|g F_{n} \Delta F_{n}\right|}{\left|F_{n}\right|}=0,
$$
where $|\cdot|$ denotes the cardinality of a set. It is well known that $G$ is amenable if and only if it admits a F\o lner sequence.

	Let $(X,G)$ be a $G$-tds, where $G$ is a countable, discrete, infinite, amenable group. A cover of $X$ is a finite family of subsets of $X$ whose union is $X .$ A partition of $X$ is a cover of $X$ whose elements are pairwise disjoint. Denote by $\mathcal{C}_{X}$ (resp. $\left.\mathcal{C}_{X}^{o}\right)$ the set of all open covers (resp. finite open covers) of $X$ and by $\mathcal{P}_{X}$ (resp. $\left.\mathcal{P}_{X}^{b}\right)$ the set of all partitions of $X$ (resp. finite Borel partitions).
	Given two covers $\mathcal{U}, \mathcal{V} \in \mathcal{C}_{X}$, $\mathcal{U}$ is said to be finer than $\mathcal{V}$ (denoted by $\left.\mathcal{U} \succeq \mathcal{V}\right)$ if each element of $\mathcal{U}$ is contained in some element of $\mathcal{V}$. Let $\mathcal{U} \vee \mathcal{V}=\{U \cap V: U \in \mathcal{U}, V \in \mathcal{V}\}$. Denote by $N(\mathcal{U})$ the number of sets in a subcover of $\mathcal{U}$ of minimal cardinality. 
	
	The entropy of $\mathcal{U} \in \mathcal{C}_{X}^{o}$ with respect to $G$-action is defined by
	$$
	h_{t o p}(G, \mathcal{U})=\lim _{n \rightarrow+\infty} \frac{1}{\left|F_{n}\right|} \log N\left(\bigvee_{g \in F_{n}} g^{-1} \mathcal{U}\right)
	$$
	where $\{F_{n}\}_{n=1}^\infty$ is a F\o lner sequence of the group amenable $G.$ As is shown in \cite[Theorem 6.1]{L} the limit exists and is independent of the choice of F\o lner sequences. The topological entropy of $(X, G)$ is defined by
	$$
	h_{t o p}(G)=h_{t o p}(G, X)=\sup _{\mathcal{U} \in C_{X}^{o}} h_{t o p}(G, \mathcal{U}).
	$$

	Denote by $\mathcal{B}_{X}$ the collection of all Borel subsets of $X$ and $\mathcal{M}(X)$ the set of all Borel probability measures on $X$. For $\mu \in \mathcal{M}(X)$, denote by $\operatorname{supp}(\mu)$ the support of $\mu$, i.e., the smallest closed subset $W \subseteq X$ such that $\mu(W)=1 .$ $\mu \in \mathcal{M}(X)$ is called $G$-invariant if $g \mu=\mu$ for each $g \in G$, and called ergodic if it is $G$-invariant and $\mu\left(\bigcup_{g \in G} g A\right)=0$ or 1 for any $A \in \mathcal{B}_{X}.$ Denote by $\mathcal{M}(X, G)$ (resp. $\left.\mathcal{M}^{e}(X, G)\right)$ the set of all $G$-invariant (resp. ergodic) elements in $\mathcal{M}(X)$. Note that the amenability of $G$ ensures that $\mathcal{M}^{e}(X, G) \neq \emptyset$ and both $\mathcal{M}(X)$ and $\mathcal{M}(X, G)$ are convex compact metric spaces when they are endowed with the weak*-topology.

	Given a finite measurable parition $\alpha$ of $X$ and a sub-$\sigma$-algebra $\mathcal{A}$ of $\mathcal{B}_{X}^{\mu}$, define
	$$
	H_{\mu}(\alpha \mid \mathcal{A})=\sum_{A \in \alpha} \int_{X}-\mathbb{E}\left(1_{A} \mid \mathcal{A}\right) \log \mathbb{E}\left(1_{A} \mid \mathcal{A}\right) d \mu
	$$
	where $\mathbb{E}\left(1_{A} \mid \mathcal{A}\right)$ is the expectation of $1_{A}$ with respect to $\mathcal{A} .$ One standard fact is that $H_{\mu}(\alpha \mid \mathcal{A})$ increases with respect to $\alpha$ and decreases with respect to $\mathcal{A} .$ If $ \mathcal{A}=\{\emptyset, X\}$ then we have
	$$
	H_{\mu}(\alpha)=H_{\mu}(\alpha \mid \mathcal{A})=\sum_{A \in \alpha}-\mu(A) \log \mu(A).
	$$
	Given $\mu \in \mathcal{M}(X, G)$ and $\alpha \in \mathcal{P}_{X}^{b},$ the measure-theoretic entropy of $\mu$ relative to $\alpha$ is defined by
	$$
	h_{\mu}(G, \alpha)=\lim _{n \rightarrow+\infty} \frac{1}{\left|F_{n}\right|} H_{\mu}\left(\bigvee_{g \in F_{n}} g^{-1} \alpha\right)
	$$
	where $\{F_{n}\}_{n=1}^\infty$ is a Fl\o lner sequence of the amenable group $G$. As shown in \cite[Theorem 6.1]{L}, the limit exists and is independent of F\o lner sequences. The measure-theoretic entropy of $\mu$ is defined by
	$$
	h_{\mu}(G)=h_{\mu}(G, X)=\sup _{\alpha \in \mathcal{P}_{X}^{b}} h_{\mu}(G, \alpha)
	$$
	

	\subsection{Mycielski's theorem}
Let $X$ be a compact metric space. A subset $K \subset X$ is called a Mycielski set if it is a union of countable many Cantor sets, which was introduced in \cite{BGKM}. For $n\ge 2$, let  $\Delta^{(n)}=\{(x_1, x_2, \cdots, x_n)\in X^n:  x_i=x_j \text{ for some }1\le i< j\le n \}$. The following result is from \cite[Theorem 1]{M}.
	
	\begin{thm}\label{Mycielski}
Suppose that $X$ is a perfect compact metric space. If for each $n \ge 2$, $R_n$ is a dense $G_\delta$ subset of $X^n$, then there exists a dense Mycielski subset $K$ of  $X$ such that $K^n\subset R_n \cup \Delta^{(n)}$ holds for all integers $n\ge 2$. 
	\end{thm}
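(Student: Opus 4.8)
The plan is to build $K$ as a countable union of Cantor sets by a Cantor-scheme construction, interleaving three kinds of requirements: one to force density, one to maintain the Cantor structure, and one to force the relations $R_n$. First I would write each comeager set as $R_n=\bigcap_{k\ge 1}R_{n,k}$ with every $R_{n,k}$ a dense open subset of $X^n$ (each $R_{n,k}$ contains the dense set $R_n$, hence is dense), and fix a countable base $\{B_j\}_{j\ge 1}$ of nonempty open subsets of $X$. The goal is to produce, for each $j\ge 1$, a Cantor set $C_j\subset B_j$ so that $K:=\bigcup_{j\ge 1}C_j$ is the desired set; since $K$ meets every $B_j$ it is then automatically dense.

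The construction is a binary Cantor scheme carried out level by level and simultaneously for all the $C_j$. At level $m$ I would have, for each $j\le m$ and each $s\in\{0,1\}^{\le m}$, a nonempty open set $U^{(j)}_s$ satisfying the usual scheme invariants: $\overline{U^{(j)}_{s0}}\cup\overline{U^{(j)}_{s1}}\subset U^{(j)}_s$, the two children having disjoint closures, $\mathrm{diam}(U^{(j)}_s)<1/m$ when $|s|=m$, and $U^{(j)}_{\emptyset}\subset B_j$. Perfectness of $X$ is exactly what lets me split any nonempty open set into two nonempty open subsets with disjoint closures, so the splitting and shrinking steps can always be performed. Setting $C_j=\bigcap_{m\ge 1}\bigcup_{|s|=m}\overline{U^{(j)}_s}$ then yields a Cantor set contained in $B_j$, whose points are indexed by branches $\sigma\in\{0,1\}^{\omega}$.

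The relations are forced by the following elementary refinement step: if $V_1,\dots,V_n$ are nonempty open subsets of $X$ and $W\subset X^n$ is dense open, then the box $V_1\times\cdots\times V_n$ meets $W$, and since $W$ is open one finds nonempty open $V_i'\subset V_i$ with $V_1'\times\cdots\times V_n'\subset W$. At the passage from level $m$ to level $m+1$ I would, before splitting, run through the finitely many requirements indexed by $n\le m$, by $k\le m$, and by $n$-tuples of pairwise distinct pairs $(j_1,s_1),\dots,(j_n,s_n)$ at level $m$ (so that the sets $U^{(j_1)}_{s_1},\dots,U^{(j_n)}_{s_n}$ are pairwise disjoint), applying the refinement step to shrink these sets so that their product lies in $R_{n,k}$. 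Processing the requirements one at a time is harmless because shrinking only makes boxes smaller, so containments already achieved persist; once all level-$m$ requirements are met I split each resulting set into two children and shrink to diameter $<1/(m+1)$, after which the level-$m$ sets are never altered again.

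It remains to read off the conclusion. Given distinct points $x_1,\dots,x_n\in K$, say $x_i\in C_{j_i}$ with branch $\sigma_i$, there is a level $m_0$ beyond which the pairs $(j_i,\sigma_i|_m)$ are pairwise distinct; fixing any $k\ge 1$ and choosing $m\ge\max\{m_0,n,k\}$, the requirement for this pair-tuple and this $k$ was enforced at level $m$, so $x_i\in U^{(j_i)}_{\sigma_i|_m}$ with $\prod_i U^{(j_i)}_{\sigma_i|_m}\subset R_{n,k}$, whence $(x_1,\dots,x_n)\in R_{n,k}$. As $k$ was arbitrary, $(x_1,\dots,x_n)\in\bigcap_k R_{n,k}=R_n$, which is exactly the assertion $K^n\subset R_n\cup\Delta^{(n)}$. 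I expect the main obstacle to be purely organizational: arranging the enumeration so that every requirement $(n,k,\text{pair-tuple})$ is handled at a single finite level while all of the Cantor-scheme invariants and the nonemptiness of every $U^{(j)}_s$ are maintained throughout. The one genuinely analytic ingredient, the box-into-dense-open refinement, is immediate.
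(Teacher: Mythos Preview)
The paper does not supply its own proof of this theorem; it simply quotes the result and cites Mycielski's original paper \cite{M}. So there is nothing in the paper to compare your argument against.

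That said, your outline is essentially the standard Cantor-scheme proof of Mycielski's theorem and is correct. One small imprecision: you write that taking pairwise distinct index pairs $(j_1,s_1),\dots,(j_n,s_n)$ ensures the sets $U^{(j_1)}_{s_1},\dots,U^{(j_n)}_{s_n}$ are pairwise disjoint, but across different values of $j$ this need not hold unless you build it into the scheme. Fortunately it is irrelevant: the box-refinement step (dense open $W$ meets any nonempty open box, hence contains a sub-box) does not require the factors to be disjoint, and your verification at the end only uses that each $x_i$ lies in the corresponding cell, not that the cells are disjoint. Also note that, strictly speaking, the Cantor scheme gives $x_i\in\overline{U^{(j_i)}_{\sigma_i|_m}}$; the containment $x_i\in U^{(j_i)}_{\sigma_i|_m}$ then follows from the nesting $\overline{U^{(j)}_{s0}}\cup\overline{U^{(j)}_{s1}}\subset U^{(j)}_s$ one level down, which you do record. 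With these two remarks in mind the argument goes through.
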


	\subsection{Disintegration of measures}
	Let $X$ be a compact metric space, $\mathcal{B}_X $ the Borel $\sigma$-algbra of $X$
	and $\mu$ a  Borel probability measure on $X$.  Let $\mathcal{B}_{X}^{\mu}$ be the completion of $\mathcal{B}_{X}$ under $\mu$. Let $\mathcal{F}$ be a sub-$\sigma$-algebra of $\B_{X}^\mu$.  Then there exists  a measurable map $X\rightarrow \mathcal{M}(X)$, $y\mapsto \mu_y$ such that for every $f\in L^1(X,\B_{X}^\mu,\mu)$, $\mathbb{E}(f|\mathcal{F})(y)=\int fd\mu_y$ for $\mu$-a.e. $y\in X$.
	
	The following lemma is from \cite{R} (see Lemma 3 in Section 4 No. 2).
	
	\begin{lemma} \label{2}Let $\mu$ be a probability measure on $X$ and $\mathcal{F}$ be a sub-$\sigma$-algebra of $\B_{X}^\mu$.
	Let $\mu=\int \mu_yd\mu(y)$ be the disintegration of $\mu$ with respect to $\mathcal{F}$. Suppose $\mu_y$ is non-atomic for $\mu$-a.e. $y\in X$. If $0\le r\le 1$ and $A\in \B_{X}^\mu$ with $\mu_y(A)\le r$ for $\mu$-a.e. $y\in X$, then there exists $A'\in \B_{X}^\mu$ such that $A\subset A'$ and $\mu_y(A')=r$ for $\mu$-a.e. $y\in X$.
	\end{lemma}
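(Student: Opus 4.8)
The plan is to identify each fibre measure $\mu_y$ with Lebesgue measure on $[0,1]$ through a single measurable coordinate, and then to enlarge $A$ fibrewise up to the level $r$ by an intermediate-value argument, arranging that every choice depends measurably on $y$. First I would record the routine reductions: since $X$ is a compact metric space, $(X,\mathcal{B}_X^\mu,\mu)$ is a standard (Lebesgue) probability space and $\mathcal{F}$ is, modulo $\mu$, countably generated, so it corresponds to a measurable partition whose conditional measures are exactly the $\mu_y$. Discarding the $\mu$-null set on which $\mu_y$ fails to be non-atomic, I may assume $\mu_y$ is non-atomic for every relevant $y$.

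The step I expect to be the main obstacle is the construction of a measurable \emph{uniformizing coordinate}, i.e. a Borel map $\theta\colon X\to[0,1]$ with $\theta_*\mu_y=\lambda$ (Lebesgue measure on $[0,1]$) for $\mu$-a.e.\ $y$. Its existence is exactly the content of Rokhlin's structure theory of Lebesgue spaces: a measurable partition of a Lebesgue space all of whose conditional measures are non-atomic is isomorphic $\bmod\,0$ to the product of its quotient with $([0,1],\lambda)$, carrying the partition into fibres; one takes $\theta$ to be the second coordinate of this isomorphism. Equivalently, $\theta$ can be built directly as the $\mu$-a.e.\ limit of the fibrewise cumulative distribution functions attached to a refining sequence of countable Borel partitions of $X$ with vanishing diameters, non-atomicity guaranteeing that the limit pushes each $\mu_y$ to $\lambda$. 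The delicate point is the \emph{simultaneous measurable} dependence of the fibrewise isomorphisms on $y$, and this is precisely what Rokhlin's theorem supplies.

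With $\theta$ in hand I would carry out the filling. For $y\in X$ and $c\in[0,1]$ set $f(y,c)=\mu_y\big(A\cup\{x:\theta(x)\le c\}\big)=\mathbb{E}\big(1_{A\cup\{\theta\le c\}}\mid\mathcal{F}\big)(y)$. For each fixed $c$ this is an $\mathcal{F}$-measurable function of $y$, while for fixed $y$ the function $c\mapsto f(y,c)$ is non-decreasing and $1$-Lipschitz, since its increments are dominated by those of $c\mapsto\mu_y(\{\theta\le c\})=\lambda([0,c])=c$; in particular it is continuous because $\theta_*\mu_y$ is non-atomic. As $f(y,0)=\mu_y(A)\le r$ and $f(y,1)=\mu_y(X)=1\ge r$, the intermediate value theorem produces a level at which $f$ equals $r$. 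I would select the least such level by defining the $\mathcal{F}$-measurable function $c=\inf\{q\in\mathbb{Q}\cap[0,1]:\mathbb{E}(1_{A\cup\{\theta\le q\}}\mid\mathcal{F})\ge r\}$, which (being an infimum over a countable $\mathcal{F}$-measurable family) is $\mathcal{F}$-measurable and, by continuity of $f$ in its second variable, satisfies $f(\,\cdot\,,c)=r$ on each fibre.

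Finally I would put $A'=A\cup\{x\in X:\theta(x)\le c(x)\}$. Since $c$ is $\mathcal{F}$-measurable and $\theta$ is Borel, $A'\in\mathcal{B}_X^\mu$; by construction $A\subset A'$; and because $c$ is constant on each fibre, for $\mu$-a.e.\ $y$ one has $\mu_y(A')=f(y,c(y))=r$, as required. The extreme cases $r=0$ (forcing $\mu_y(A)=0$ and $c=0$) and $r=1$ (forcing $A'=X$) are subsumed in the same formula, so no separate argument is needed.
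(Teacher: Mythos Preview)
The paper does not supply its own proof of this lemma; it merely cites Rokhlin's foundational paper \cite{R} (Lemma~3 in Section~4, No.~2) and moves on. Your argument is correct and is in essence a reconstruction of the classical proof: the uniformizing coordinate $\theta$ is exactly the second factor of Rokhlin's product isomorphism for a measurable partition with non-atomic conditional measures, and the fibrewise intermediate-value selection of a measurable level $c$ is the standard way to finish. So there is nothing substantive to compare---you have filled in what the paper left as a citation, and the key non-trivial input (the simultaneous measurable fibre isomorphism) is precisely the content of the reference the paper invokes.
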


%

	The following result gives a decomposition of positive measurable sets.
	\begin{lemma}\label{lemma2}Let $\mu$ be a probability measure on $X$ and $\mathcal{F}$ be a sub-$\sigma$-algebra of $\B_{X}^\mu$.
		Let $\mu=\int \mu_yd\mu(y)$ be the disintegration of $\mu$ with respect to $\mathcal{F}$.  Suppose $\mu_y$ is non-atomic for $\mu$-a.e. $y\in X$. Then for any $A\in \B_{X}^\mu$ with $\mu(A)>0$ and for any $\epsilon>0$, there exist finitely many disjoint measurable sets $A_1,A_2,\cdots,A_k$ such that 
		\begin{itemize}
		\item 	$\mu(A\triangle \bigcup_{i=1}^{k}A_i)<\epsilon$.
		\item For each $1\le i \le k$, we can find $B_i\in \mathcal{F}$ with $\mu(B_i)>0$ and $q_i>0$ satisfying that for $\mu$-a.e. $y\in B_i$, $\mu_{y}(A_i)=q_i$ and for $\mu$-a.e. $y\in B_i^{c}$, $\mu_{y}(A_i)=0$. 
		\end{itemize}
	\end{lemma}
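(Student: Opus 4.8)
The plan is to slice $A$ into thin ``horizontal'' layers according to the value of the $\mathcal{F}$-measurable function $\phi(y):=\mu_y(A)=\E(1_A\mid\mathcal{F})(y)\in[0,1]$ and then, on each layer, to carve out of $A$ a subset whose fibre measure is exactly constant. First I fix an integer $m$ with $m>2/\epsilon$ and set, for $1\le j\le m$,
\[
C_j=\Big\{y\in X:\ \tfrac{j-1}{m}<\phi(y)\le\tfrac{j}{m}\Big\}\in\mathcal{F}.
\]
Together with $\{\phi=0\}$ these sets partition $X$, and $\mu(A\cap\{\phi=0\})=\int_{\{\phi=0\}}\phi\,d\mu=0$, so the layer $\{\phi=0\}$ can be ignored; I also discard every $j$ with $\mu(C_j)=0$. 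Since $\mu(A)>0$, at least one $C_j$ survives. Throughout I use the elementary fact that for $C\in\mathcal{F}$ one has $\mu_y(C)=1_C(y)$ for $\mu$-a.e.\ $y$ (because $\E(1_C\mid\mathcal{F})=1_C$ and $\int 1_C\,d\mu_y=\E(1_C\mid\mathcal F)(y)$ a.e.); in particular $\mu_y(X\setminus C_j)=0$ for $\mu$-a.e.\ $y\in C_j$, which makes ``conditioning on $C_j$'' inside the disintegration harmless.

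The carving step is a ``dual'' application of Lemma \ref{2}. For a surviving $j\ge2$ put $q_j=\tfrac{j-1}{m}>0$. Then for $\mu$-a.e.\ $y\in C_j$ we have $\mu_y(C_j\setminus A)=1-\phi(y)\le 1-q_j$, and for $\mu$-a.e.\ $y\notin C_j$ we have $\mu_y(C_j\setminus A)\le\mu_y(C_j)=0\le 1-q_j$. Hence Lemma \ref{2} (applicable since $\mu_y$ is non-atomic for $\mu$-a.e.\ $y$) yields $E_j\in\B_{X}^{\mu}$ with $C_j\setminus A\subset E_j$ and $\mu_y(E_j)=1-q_j$ for $\mu$-a.e.\ $y$; I set $A_j:=C_j\setminus E_j$. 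Then $A_j\subset C_j\setminus(C_j\setminus A)=A\cap C_j$, and for $\mu$-a.e.\ $y\in C_j$, using $\mu_y(E_j\setminus C_j)=0$,
\[
\mu_y(A_j)=\mu_y(C_j)-\mu_y(C_j\cap E_j)=1-\mu_y(E_j)=q_j,
\]
while $\mu_y(A_j)\le\mu_y(C_j)=0$ for $\mu$-a.e.\ $y\notin C_j$. For the bottom layer $j=1$, where $q_1=0$ is not allowed, I instead apply Lemma \ref{2} to $\emptyset$ with $r=1-\tfrac1m$, obtaining $E_1$ with $\mu_y(E_1)=1-\tfrac1m$ a.e., and set $A_1:=C_1\setminus E_1\subset C_1$; this gives $\mu_y(A_1)=\tfrac1m=:q_1$ for $\mu$-a.e.\ $y\in C_1$ and $\mu_y(A_1)=0$ for $\mu$-a.e.\ $y\notin C_1$ (here $A_1$ need not lie inside $A$). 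Relabelling the surviving pieces as $A_1,\dots,A_k$ with bases $B_i:=C_{j(i)}\in\mathcal{F}$, $\mu(B_i)>0$, $q_i>0$, the second assertion of the statement holds, and the $A_i$ are pairwise disjoint because the layers $C_j$ are.

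It remains to bound $\mu(A\triangle\bigcup_iA_i)$. Since $A_j\subset A$ for every surviving $j\ge2$, the only part of $\bigcup_iA_i$ outside $A$ lies in $A_1$, and $\mu(A_1)=\int\mu_y(A_1)\,d\mu=q_1\mu(C_1)\le\tfrac1m$. For the other direction,
\[
\mu\Big(A\setminus\bigcup_iA_i\Big)\le\mu(A\cap\{\phi=0\})+\sum_{j}\mu\big((A\cap C_j)\setminus A_j\big),
\]
where the sum runs over surviving $j$; here $\mu(A\cap\{\phi=0\})=0$, $\mu((A\cap C_1)\setminus A_1)\le\mu(A\cap C_1)=\int_{C_1}\phi\,d\mu\le\tfrac1m\mu(C_1)$, and for $j\ge2$ one has $\mu((A\cap C_j)\setminus A_j)=\int_{C_j}(\phi-q_j)\,d\mu\le\tfrac1m\mu(C_j)$ (since $\phi\le\tfrac{j}{m}$ and $q_j=\tfrac{j-1}{m}$ on $C_j$), so the right-hand side is at most $\tfrac1m\sum_j\mu(C_j)\le\tfrac1m$. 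Altogether $\mu(A\triangle\bigcup_iA_i)\le\tfrac2m<\epsilon$, as desired. I expect the one genuinely delicate point to be the ``dual'' use of Lemma \ref{2}, together with the verification that passing to the $\mathcal{F}$-set $C_j$ inside the disintegration costs nothing --- which is exactly the identity $\mu_y(C_j)=1_{C_j}(y)$ a.e.; everything else is routine bookkeeping with the disintegration.
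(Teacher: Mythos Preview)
Your proof is correct and follows essentially the same strategy as the paper: slice $X$ by the level sets of $y\mapsto\mu_y(A)$ and apply Lemma~\ref{2} on each slice to force the fibre measure to be exactly constant. The only cosmetic difference is that the paper \emph{enlarges} $A\cap B_i$ via Lemma~\ref{2} to hit the upper endpoint $r_i+1/n$ of the $i$-th interval (so the bottom layer needs no special treatment), whereas you \emph{shrink} $A\cap C_j$ by the complement trick to hit the lower endpoint $(j-1)/m$, which then forces the separate handling of $j=1$.
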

	
	\begin{proof}
		For any $\epsilon>0$, there exists $n\in \N$ such that ${1}/{n}<\epsilon$. Let $r_i={i}/{n}$ and $B_i=\{y\in X:r_i< \mu_{y}(A)\le r_i+{1}/{n}\}\cap \{y\in X:\mu_{y}(\alpha(y))=1\}$ where $\alpha(y)$ is an atom containing $y$ of $\mathcal{F}$, $i=0,1,\cdots,n-1$. Note that for $\mu$-a.e. $y\in B_i$, 
		$$r_i<\mu_{y}(A\cap B_i)\le r_i+{1}/{n}$$
		for each $i=0,1,\ldots,n-1.$
		 By Lemma \ref{2}, for each $i=0,1,\ldots,n-1$ there exists measurable subset $A_i$ of $X$ such that $A\cap B_i\subset A_i \subset B_i$ and for $\mu$-a.e. $y\in B_i$, $\mu_{y}(A_i)=r_i+{1}/{n}$. Then 
		 \begin{align*}
		 \mu(A\triangle \bigcup_{i=1}^{k}A_i)&=\sum_{i=0}^{n-1}\int_{B_i}\mu_{y}(A\triangle A_i)d\mu(y)\\
		 &=\sum_{i=0}^{n-1}\int_{B_i}\mu_{y}((A\cap B_i)\triangle A_i)d\mu(y)\\
		 &=\sum_{i=0}^{n-1}\int_{B_i}\mu_{y}(A_i\setminus(A\cap B_i))d\mu(y)\\
		 &\le \frac{1}{n}.
		 \end{align*}
			Hence the above  $A_1,A_2,\ldots, A_k$ can reach our requirements. 
	\end{proof}
	
	\subsection{Pinsker $\sigma$-algebra}
	
	Let $(X,\B_{X}^\mu,\mu,G)$ be a $G$-mps. The Pinsker $\sigma$-algebra of $(X,\B_{X}^\mu,\mu,G)$ is defined by $$\mathcal{P}_{\mu}(G)=\{A\in\mathcal{B}_X^\mu:h_{\mu}(G,\{A,A^c\})=0\}.$$ 
	
	Let $\mathcal{C}$ be a sub-$\sigma$-algebra of $\mathcal{B}^{\mu}_X$. The {\it relatively independent self-joining of $\mu$ with respect to $\mathcal{C}$} is the Borel probability measure $$\mu\times_\mathcal{C} \mu =\int\mu_y\times\mu_yd\mu(y)$$ on $X\times X$  in the sense that
	$$\mu\times_\mathcal{C} \mu(A\times B)=\int\mu_{y}(A)\times\mu_y(B)d\mu(y)$$
	for all $A,B \in \mathcal{B}_X$ where $\mu=\int \mu_yd\mu(y)$ is the disintegration of $\mu$ with respect to $\mathcal{C}$.
	
		The following result is well-known (for example, see \cite[ Proposition 3.1]{WG}).
	
	\begin{lemma} \label{3}
		Let $(X,G)$ be a $G$-tds and $\mu \in \mathcal{M}^{e}(X, G)$ with $h_{\mu}(G)>0$. Let $\mu=\int \mu_yd\mu(y)$ be the disintegration of $\mu$ with respect to  $\mathcal{P}_\mu(G)$ and $\lambda=\mu \times_{\mathcal{P}_\mu(G)} \mu$. Then $\mu_{y}$ is non-atomic for $\mu$-a.e. $y\in X$ and $\lambda \in \mathcal{M}^{e}(X \times X, G)$.
	\end{lemma}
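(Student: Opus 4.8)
The plan is to prove the two assertions in the order that makes the second feed the first: I would first establish that $\lambda$ is ergodic, and then deduce non-atomicity of the fibers from ergodicity, so that no circularity arises. Throughout, write $Y=X/\mathcal{P}_\mu(G)$ for the Pinsker factor with its factor measure $\nu$, and let $\pi\colon X\to Y$ be the factor map; then $\mu=\int \mu_y\,d\mu(y)$ is the disintegration over $\mathcal{P}_\mu(G)=\pi^{-1}(\mathcal{B}_Y)$, and $\lambda=\int \mu_y\times\mu_y\,d\mu(y)$ is a $G$-invariant self-joining concentrated over the diagonal copy of $Y$.

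First I would show that the extension $\pi$ has completely positive relative entropy. Since the Pinsker factor carries zero entropy, the relative-entropy addition formula for amenable actions gives, for every finite Borel partition $\alpha$, the identity $h_\mu(G,\alpha)=h_\mu(G,\alpha\mid \mathcal{P}_\mu(G))$ (one has $h_\mu(G,\alpha\mid\mathcal{P}_\mu(G))\le h_\mu(G,\alpha)\le h_\mu(G,\alpha\vee\mathcal{P}_\mu(G))=h_\mu(G,\alpha\mid\mathcal{P}_\mu(G))$). Combined with the defining maximality of $\mathcal{P}_\mu(G)$—any partition with $h_\mu(G,\alpha)=0$ is $\mathcal{P}_\mu(G)$-measurable—this yields $h_\mu(G,\alpha\mid \mathcal{P}_\mu(G))>0$ for every $\alpha$ that is not $\mathcal{P}_\mu(G)$-measurable. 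Hence $\pi$ is a relatively completely-positive-entropy extension. By the relative Furstenberg--Zimmer structure theory, such an extension admits no nontrivial intermediate isometric (compact) extension, since isometric extensions are relatively zero entropy; therefore $\pi$ is relatively weakly mixing. A relatively weakly mixing extension over an ergodic base has an ergodic relatively independent self-joining, and $\nu$ is ergodic as a factor of the ergodic $\mu$; hence $\lambda\in \mathcal{M}^{e}(X\times X,G)$, which is the second assertion.

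For the non-atomicity I would exploit the diagonal $\Delta=\{(x,x):x\in X\}$, a closed $G$-invariant subset of $X\times X$. A direct computation with the disintegration gives
$$\lambda(\Delta)=\int_X (\mu_y\times\mu_y)(\Delta)\,d\mu(y)=\int_X \sum_{x}\mu_y(\{x\})^2\,d\mu(y),$$
so $\lambda(\Delta)=0$ holds precisely when $\mu_y$ is non-atomic for $\mu$-a.e.\ $y$. Since $\lambda$ is ergodic and $\Delta$ is invariant, $\lambda(\Delta)\in\{0,1\}$. The value $1$ is impossible: $\lambda(\Delta)=1$ forces $\sum_x\mu_y(\{x\})^2=1$ for $\mu$-a.e.\ $y$, whence each $\mu_y$ is a single Dirac mass $\delta_{s(y)}$ for a measurable section $s$; then $s$ inverts $\pi$ modulo null sets, so $\mathcal{B}_X^\mu=\mathcal{P}_\mu(G)$ up to $\mu$-null sets and therefore $h_\mu(G)=h_\mu(G,\mathcal{P}_\mu(G))=0$, contradicting $h_\mu(G)>0$. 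Thus $\lambda(\Delta)=0$ and $\mu_y$ is non-atomic for $\mu$-a.e.\ $y$.

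The hard part will be the structure theory invoked in the second paragraph. For $G=\mathbb{Z}$ the chain ``positive entropy $\Rightarrow$ relatively CPE over the Pinsker factor $\Rightarrow$ relatively weakly mixing $\Rightarrow$ ergodic relative product'' is classical, but for a general countable amenable $G$ each link relies on the amenable relative entropy theory (Ornstein--Weiss, Rudolph--Weiss, Danilenko) together with the relative Furstenberg--Zimmer dichotomy for amenable actions. I would either assemble these from the literature or, as the paper does, cite the known result \cite[Proposition 3.1]{WG}; the remaining ingredients—the relative-entropy identity, the diagonal computation, and the Dirac/section argument—are routine once that machinery is available.
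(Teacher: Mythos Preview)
Your proposal is correct. The paper does not actually prove this lemma: it simply records it as well known and cites \cite[Proposition 3.1]{WG}. What you have supplied is therefore genuinely more than the paper offers---a self-contained outline rather than a citation. Your two-step strategy (ergodicity of $\lambda$ first, then non-atomicity via the diagonal) is sound and avoids circularity; the diagonal computation $(\mu_y\times\mu_y)(\Delta)=\sum_x\mu_y(\{x\})^2$ together with the Dirac/section dichotomy is exactly the clean way to handle the first assertion once $\lambda$ is known to be ergodic. The only place requiring care is, as you correctly flag, the chain ``relatively CPE over the Pinsker factor $\Rightarrow$ no nontrivial intermediate compact extension $\Rightarrow$ relatively weakly mixing $\Rightarrow$ ergodic relative product'' for a general countable amenable $G$: each link is available in the literature (the Furstenberg--Zimmer dichotomy holds for arbitrary group actions, compact extensions have zero relative entropy, and the amenable relative-entropy identities are in Rudolph--Weiss/Danilenko), but assembling them is nontrivial and is precisely the content behind the reference \cite{WG} that the paper invokes. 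Your closing remark that one may either carry this out or cite \cite{WG} matches the paper's choice.
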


The following result plays a key role in our proof of the main result. It was
proved in \cite[Theorem 2.13]{RW}, \cite[Theorem 0.1]{DD} and \cite[Theorem 6.10]{HYZ}.
	
	\begin{thm} \label{6}
		Let $(X,G)$ be a $G$-tds and $\mu \in \mathcal{M}^{e}(X, G)$. Let $\alpha$ be a finite measurable partition of $X$ and $\epsilon>0$. Then there exists a finite subset $K$ of $G$ such that for every finite subset $Q$ of $G$ with $(QQ^{-1} \backslash \{e_G\})\cap K=\emptyset$, one has 
		\begin{equation}\label{4}
			\left|H_{\mu}(\alpha | \mathcal{P}_{\mu}(G))-\frac{1}{|Q|}H_{\mu}\left(\bigvee_{g\in Q}g^{-1}\alpha | \mathcal{P}_{\mu}(G)\right)\right|<\epsilon.
		\end{equation}
	\end{thm}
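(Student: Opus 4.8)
The plan is to separate the two inequalities hidden in the absolute value: the upper bound holds for every finite $Q$ and is elementary, while the lower bound is where the hypothesis on $Q$ enters, and I will reduce it by the entropy chain rule to a single uniform conditional-independence estimate. Throughout write $\mathcal{P}=\mathcal{P}_\mu(G)$; recall that $\mathcal{P}$ is $G$-invariant and each $g\in G$ acts as a $\mu$-preserving transformation, so $H_\mu(g^{-1}\alpha\mid\mathcal{P})=H_\mu(\alpha\mid\mathcal{P})$ for every $g$. The upper bound is then immediate from subadditivity of conditional entropy:
$$H_\mu\Big(\bigvee_{g\in Q}g^{-1}\alpha\;\Big|\;\mathcal{P}\Big)\le\sum_{g\in Q}H_\mu\big(g^{-1}\alpha\mid\mathcal{P}\big)=|Q|\,H_\mu(\alpha\mid\mathcal{P}),$$
which gives $\frac1{|Q|}H_\mu(\bigvee_{g\in Q}g^{-1}\alpha\mid\mathcal{P})\le H_\mu(\alpha\mid\mathcal{P})$ with no restriction on $Q$.

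For the lower bound, enumerate $Q=\{g_1,\dots,g_m\}$ and expand by the chain rule
$$H_\mu\Big(\bigvee_{j=1}^{m}g_j^{-1}\alpha\;\Big|\;\mathcal{P}\Big)=\sum_{j=1}^{m}H_\mu\Big(g_j^{-1}\alpha\;\Big|\;\mathcal{P}\vee\bigvee_{i<j}g_i^{-1}\alpha\Big).$$
Since $\mathcal{P}=g_j^{-1}\mathcal{P}$ and $g_i^{-1}\alpha=g_j^{-1}(g_jg_i^{-1}\alpha)$, the invariance $H_\mu(g^{-1}\gamma\mid g^{-1}\mathcal{D})=H_\mu(\gamma\mid\mathcal{D})$ rewrites the $j$-th summand as $H_\mu(\alpha\mid\mathcal{P}\vee\bigvee_{s\in S_j}s\alpha)$ with $S_j=\{g_jg_i^{-1}:i<j\}$. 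Each element $g_jg_i^{-1}$ lies in $QQ^{-1}\setminus\{e_G\}$ and hence, by the hypothesis, avoids $K$, so $S_j\subseteq G\setminus K$. Thus it suffices to build a finite $K$ for which
$$(\star)\qquad H_\mu\Big(\alpha\;\Big|\;\mathcal{P}\vee\bigvee_{s\in S}s\alpha\Big)\ \ge\ H_\mu(\alpha\mid\mathcal{P})-\epsilon\quad\text{for all finite }S\subseteq G\setminus K;$$
applying $(\star)$ to each $S_j$ bounds every summand below by $H_\mu(\alpha\mid\mathcal{P})-\epsilon$, and dividing by $|Q|=m$ delivers the lower bound.

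It remains to establish $(\star)$. Because conditioning on a larger $\sigma$-algebra only decreases conditional entropy, the left-hand side of $(\star)$ over finite $S\subseteq G\setminus K$ is minimized in the limit $S=G\setminus K$, so $(\star)$ is equivalent to $H_\mu(\alpha\mid\mathcal{P}\vee\bigvee_{s\notin K}s\alpha)\ge H_\mu(\alpha\mid\mathcal{P})-\epsilon$. As $K$ exhausts $G$ through an increasing sequence of finite sets, the $\sigma$-algebras $\bigvee_{s\notin K}s\alpha$ decrease to the tail $\sigma$-algebra $\mathcal{T}_\alpha=\bigcap_{F\in F(G)}\bigvee_{s\notin F}s\alpha$, and by reverse martingale convergence of conditional entropy for the finite partition $\alpha$,
$$\lim_{K\uparrow G}H_\mu\Big(\alpha\;\Big|\;\mathcal{P}\vee\bigvee_{s\notin K}s\alpha\Big)=H_\mu(\alpha\mid\mathcal{P}\vee\mathcal{T}_\alpha).$$
Hence $(\star)$ follows for $K$ large, provided one knows the single equality $H_\mu(\alpha\mid\mathcal{P}\vee\mathcal{T}_\alpha)=H_\mu(\alpha\mid\mathcal{P})$.

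This last equality is the crux and the step I expect to be the main obstacle. It says that information living at infinity is deterministic: the clean sufficient statement is $\mathcal{T}_\alpha\subseteq\mathcal{P}$ modulo $\mu$, i.e.\ the Pinsker factor absorbs the whole tail of $\alpha$. This is exactly the relative Kolmogorov (relative completely positive entropy) property of the extension of $(X,\mathcal{B}_X^\mu,\mu,G)$ over its Pinsker factor; for a single transformation it is the Rokhlin--Sinai description of $\mathcal{P}$ as the remote tail, and for a countable amenable $G$ it is the substance of \cite{RW,DD,HYZ}, proved via the Ornstein--Weiss/Rudolph--Weiss entropy machinery and local entropy theory. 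I would either invoke this containment directly, or, to stay self-contained, reprove $\mathcal{T}_\alpha\subseteq\mathcal{P}$ by showing that the $G$-invariant factor generated by $\mathcal{T}_\alpha$ has zero entropy, bounding $H_\mu(\bigvee_{g\in F_n}g^{-1}\beta)$ sublinearly in $|F_n|$ for $\mathcal{T}_\alpha$-measurable $\beta$ by a F\o lner-averaging estimate. The genuinely delicate point is that this must hold \emph{uniformly over all finite configurations} $S$ rather than for a fixed number of coordinates: pairwise, or any fixed-order, decay of correlations over $\mathcal{P}$ does not by itself force the joint near-independence demanded by $(\star)$, which is precisely why the full relative $K$-property, and not merely mixing, is required.
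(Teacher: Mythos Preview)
The paper does not prove Theorem~\ref{6}: it is quoted from the literature (Rudolph--Weiss \cite{RW}, Danilenko \cite{DD}, Huang--Ye--Zhang \cite{HYZ}), so there is no in-paper argument to compare against. Your outline is the standard reduction one finds in those sources: subadditivity handles the upper bound, and the chain rule together with $G$-invariance reduces the lower bound to the single uniform estimate $(\star)$, whose content is precisely the relative $K$-property of the extension of $(X,\mu,G)$ over its Pinsker factor. That identification is correct, and you are right that this is where all the work lies; but note that your proposal then \emph{cites the very references the paper cites for the theorem itself}, so as a self-contained proof it is circular. Your suggested self-contained route (show the $G$-invariant tail factor has zero entropy by a F\o lner estimate) is exactly what is non-trivial for amenable $G$ without an ordering, and is what \cite{RW,DD} actually establish via orbit-equivalence/Ornstein--Weiss machinery; a short argument along the lines you sketch does not exist in general.

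There is also a small technical slip in your martingale step. For a decreasing net of sub-$\sigma$-algebras $\mathcal{B}_K=\bigvee_{s\notin K}s\alpha$ one has
\[
\lim_{K\uparrow G} H_\mu\Big(\alpha\;\Big|\;\mathcal{P}\vee\mathcal{B}_K\Big)=H_\mu\Big(\alpha\;\Big|\;\bigcap_K(\mathcal{P}\vee\mathcal{B}_K)\Big),
\]
and in general $\bigcap_K(\mathcal{P}\vee\mathcal{B}_K)\supsetneq\mathcal{P}\vee\bigcap_K\mathcal{B}_K=\mathcal{P}\vee\mathcal{T}_\alpha$. What you actually need is that the \emph{relative} tail $\bigcap_K(\mathcal{P}\vee\mathcal{B}_K)$ equals $\mathcal{P}$ modulo $\mu$; this is the correct formulation of relative tail-triviality and is what the cited references prove. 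The weaker containment $\mathcal{T}_\alpha\subseteq\mathcal{P}$ is not by itself sufficient for your limit computation.
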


	Repeat the statements of remark after \cite[Theorem 2.6]{HLY-Arxiv}, we can rewrite the formula (2.1) as 
	\begin{equation} \label{5}
		\int_X \left|\frac{1}{|Q|}\sum_{g\in Q}H_{\mu_{y}}(g^{-1}\alpha) - \frac{1}{|Q|}H_{\mu_{y}}\left(\bigvee_{g\in Q}g^{-1}\alpha\right)\right|d\mu(y)<\epsilon,
	\end{equation}
where $\mu=\int \mu_yd\mu(y)$ is the disintegration of $\mu$ with respect to $\mathcal{P}_{\mu}(G)$.

	The following result is from \cite[Theorem 0.4 (iii)]{DD}.
	
	\begin{lemma}\label{7}
		Let $(X,G)$ be a $G$-tds and $\mu \in \mathcal{M}^{e}(X, G)$. Let $\mu=\int \mu_yd\mu(y)$ be the disintegration of $\mu$ with respect to $\mathcal{P}_{\mu}(G)$ and $\lambda=\mu \times_{\mathcal{P}_{\mu}(G)} \mu$. Let $\pi:X \times X \rightarrow X $ be the projection to the first coordinate. Then  $$P_{\lambda}(G)=\pi^{-1}(\mathcal{P}_{\mu}(G)),$$
		 under ignoring $\lambda$-null sets.
	\end{lemma}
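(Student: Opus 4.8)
The plan is to prove the two inclusions $\pi^{-1}(\mathcal{P}_\mu(G))\subseteq\mathcal{P}_\lambda(G)$ and $\mathcal{P}_\lambda(G)\subseteq\pi^{-1}(\mathcal{P}_\mu(G))$ separately; write $\pi_1=\pi$ and $\pi_2\colon X\times X\to X$ for the two coordinate projections, $\mathcal{A}_i=\pi_i^{-1}(\mathcal{B}_X^\mu)$ for $i=1,2$, and $\mathcal{F}=\mathcal{P}_\mu(G)$. Two structural facts about $\lambda=\mu\times_{\mathcal{F}}\mu=\int\mu_y\times\mu_y\,d\mu(y)$ come first. Since $\mathbb{E}(\mathbbm{1}_A\mid\mathcal{F})=\mathbbm{1}_A$ for $A\in\mathcal{F}$, the disintegration property gives $\mu_y(A)=\mathbbm{1}_A(y)$ for $\mu$-a.e.\ $y$, so that $\lambda(\pi_1^{-1}A\,\triangle\,\pi_2^{-1}A)=\int_X 2\mu_y(A)\mu_y(A^c)\,d\mu(y)=0$; hence $\pi_1^{-1}\mathcal{F}=\pi_2^{-1}\mathcal{F}$ modulo $\lambda$-null sets, and I denote this common sub-$\sigma$-algebra by $\mathcal{F}'$. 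Moreover the disintegration of $\lambda$ over $\mathcal{F}'$ has fibres of product form $\mu_y\times\mu_y$, so $\mathcal{A}_1$ and $\mathcal{A}_2$ are $\lambda$-conditionally independent over $\mathcal{F}'$, while $\mathcal{A}_1\vee\mathcal{A}_2=\mathcal{B}_{X\times X}^\lambda$. The inclusion $\pi^{-1}(\mathcal{F})\subseteq\mathcal{P}_\lambda(G)$ is then immediate: $\pi$ is a factor map $(X\times X,\lambda,G)\to(X,\mu,G)$, so for $A\in\mathcal{F}$ one has $h_\lambda(G,\{\pi^{-1}A,\pi^{-1}A^c\})=h_\mu(G,\{A,A^c\})=0$; since $\mathcal{F}'=\pi^{-1}\mathcal{F}$ this yields $\mathcal{F}'\subseteq\mathcal{P}_\lambda(G)$.

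For the reverse inclusion the goal is to show that $(X\times X,\mathcal{B}_{X\times X}^\lambda,\lambda,G)$ is a completely positive relative entropy extension of $(X\times X,\mathcal{F}',\lambda,G)$: granting this, any $B\in\mathcal{P}_\lambda(G)$ satisfies $h_\lambda(G,\{B,B^c\}\mid\mathcal{F}')\le h_\lambda(G,\{B,B^c\})=0$, which forces $\{B,B^c\}$ to be $\mathcal{F}'$-measurable, so $\mathcal{P}_\lambda(G)\subseteq\mathcal{F}'$, and with the previous paragraph we get $\mathcal{P}_\lambda(G)=\mathcal{F}'=\pi^{-1}(\mathcal{P}_\mu(G))$, the assertion. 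The structural reason this should work: the base $(X\times X,\mathcal{F}',\lambda,G)$ is isomorphic to the zero-entropy factor $(X,\mathcal{P}_\mu(G),\mu,G)$; the extension $(X,\mathcal{B}_X^\mu,\mu,G)\to(X,\mathcal{P}_\mu(G),\mu,G)$ has completely positive relative entropy, which is exactly the defining property of the Pinsker $\sigma$-algebra once one notes that $h_\mu(G,\alpha\mid\mathcal{P}_\mu(G))=h_\mu(G,\alpha)$ for every finite partition $\alpha$ (a conditional-entropy identity valid whenever the conditioning $\sigma$-algebra carries zero entropy, and consistent with the limiting form of Theorem \ref{6}); and $\lambda$ is the relatively independent joining of this extension with itself over the common base. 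It then suffices to invoke the relativized fact that a relatively independent joining of completely-positive-relative-entropy extensions over a common base is again of completely positive relative entropy over that base.

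The main obstacle is precisely this relativized joining statement for a general countable discrete amenable $G$. If one prefers not to quote it, it can be produced by hand. The conditional independence of $\mathcal{A}_1$ and $\mathcal{A}_2$ over $\mathcal{F}'$, together with the identity $h_\mu(G,\alpha\mid\mathcal{P}_\mu(G))=h_\mu(G,\alpha)$, gives for finite partitions $\alpha,\gamma$ of $X$ the additivity
\begin{align*}
h_\lambda(G,\pi_1^{-1}\alpha\vee\pi_2^{-1}\gamma\mid\mathcal{F}')&=h_\lambda(G,\pi_1^{-1}\alpha\mid\mathcal{F}')+h_\lambda(G,\pi_2^{-1}\gamma\mid\mathcal{F}')\\
&=h_\mu(G,\alpha)+h_\mu(G,\gamma),
\end{align*}
the first equality because, given $\mathcal{F}'$, conditioning additionally on $\pi_1^{-1}\big(\bigvee_{g}g^{-1}\alpha\big)$ changes nothing. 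Next let $\mathcal{Q}$ be the relative Pinsker $\sigma$-algebra of $\lambda$ over $\mathcal{F}'$, i.e.\ the $\sigma$-algebra generated by those $B$ with $h_\lambda(G,\{B,B^c\}\mid\mathcal{F}')=0$; it is a $G$-invariant $\sigma$-algebra containing $\mathcal{F}'$, and via the isomorphisms $(X\times X,\mathcal{A}_i,\lambda)\cong(X,\mathcal{B}_X^\mu,\mu)$ one checks $\mathcal{Q}\cap\mathcal{A}_i=\mathcal{F}'$ for $i=1,2$. Feeding the displayed additivity, the relative Rokhlin--Sinai identity $h_\lambda(G,\cdot\mid\mathcal{F}')=h_\lambda(G,\cdot\mid\mathcal{Q})$ and the subadditivity of conditional entropy into a squeezing argument shows that $\mathcal{Q}$ contributes nothing beyond $\mathcal{F}'$ on either coordinate; since $\mathcal{A}_1\vee\mathcal{A}_2=\mathcal{B}_{X\times X}^\lambda$ and $\mathcal{A}_1,\mathcal{A}_2$ are conditionally independent over $\mathcal{F}'$, this forces $\mathcal{Q}=\mathcal{F}'$, which is the required completely positive relative entropy. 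All the entropy machinery for amenable group actions used here — relative entropy and its elementary properties, the conditional-entropy identity over zero-entropy factors, the relative Rokhlin--Sinai identity, conditional-independence additivity — is available from the references already cited in the paper, in particular \cite{RW}, \cite{DD} and \cite{HYZ}.
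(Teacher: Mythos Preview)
The paper does not give its own proof of this lemma: it simply quotes the result from \cite[Theorem 0.4 (iii)]{DD}. So there is no argument in the paper to compare your proposal against; you have supplied strictly more than the paper does.

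Your overall architecture is the standard one and is correct: the easy inclusion $\pi^{-1}\mathcal{P}_\mu(G)\subseteq\mathcal{P}_\lambda(G)$ via the factor map, and the hard inclusion by showing the extension over $\mathcal{F}'=\pi^{-1}\mathcal{P}_\mu(G)$ has completely positive relative entropy, using that $\lambda$ is the relatively independent self-joining of a relative-CPE extension over its base. The preliminary observations (that $\pi_1^{-1}\mathcal{F}=\pi_2^{-1}\mathcal{F}$ mod $\lambda$, that the fibres of $\lambda$ over $\mathcal{F}'$ are products, and the additivity formula for $h_\lambda(G,\pi_1^{-1}\alpha\vee\pi_2^{-1}\gamma\mid\mathcal{F}')$) are all right.

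The one place your write-up is genuinely thin is the ``squeezing argument'' paragraph. Knowing $\mathcal{Q}\cap\mathcal{A}_i=\mathcal{F}'$ for $i=1,2$ together with conditional independence of $\mathcal{A}_1,\mathcal{A}_2$ over $\mathcal{F}'$ does \emph{not} by itself force $\mathcal{Q}=\mathcal{F}'$: a sub-$\sigma$-algebra of $\mathcal{A}_1\vee\mathcal{A}_2$ can meet each $\mathcal{A}_i$ trivially while being nontrivial. What actually closes the argument is combining the additivity display with the relative Rokhlin--Sina\u{\i} identity $h_\lambda(G,\cdot\mid\mathcal{F}')=h_\lambda(G,\cdot\mid\mathcal{Q})$ and the chain rule to obtain, for all finite $\alpha,\gamma$,
\[
h_\lambda\bigl(G,\pi_1^{-1}\alpha\mid\mathcal{Q}\vee\pi_2^{-1}\mathcal{B}_X^\mu\bigr)=h_\mu(G,\alpha),\qquad
h_\lambda\bigl(G,\pi_2^{-1}\gamma\mid\mathcal{Q}\vee\pi_1^{-1}\mathcal{B}_X^\mu\bigr)=h_\mu(G,\gamma),
\]
whence $\mathcal{Q}$ is conditionally independent of $\mathcal{A}_1$ over $\mathcal{F}'\vee\mathcal{A}_2$ and of $\mathcal{A}_2$ over $\mathcal{F}'\vee\mathcal{A}_1$; together with $\mathcal{Q}\subseteq\mathcal{A}_1\vee\mathcal{A}_2$ this does yield $\mathcal{Q}=\mathcal{F}'$. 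You gesture at this but do not write it. Since you ultimately fall back on \cite{RW}, \cite{DD}, \cite{HYZ} for exactly these ingredients --- and since \cite{DD} is precisely where the paper gets the lemma --- your proposal is acceptable, but if you want it to stand as a self-contained proof you should spell out that last deduction rather than labelling it a ``squeezing argument''.
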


	
	
	

    \section{ Proof  of Theorem \ref{Thm-A}}
    In this section, we prove Theorem \ref{Thm-A}. Let us begin with the following lemma.
    \begin{lemma}\label{lem-1}Given $I,J\in\mathbb{N}$, we let $\mathcal{I}=\{1,2\ldots,I\}$ and $\mathcal{J}=\{1,2,\ldots,J\}$. For any  $\epsilon>0$ and $a_i^{(j)}\in [0,1]$, $i\in\mathcal{I}$, $ j\in\mathcal{J}$ with $\sum_{i=1}^Ia_i^{(j)}=1, j\in\mathcal{J}$, there exists $\delta>0$ such that the following holds: Let $(X,\mathcal{B},\mu)$ be a probability space and $\alpha^{(j)}=\{A_1^{(j)},A_2^{(j)},\cdots,A_I^{(j)}\}$, $j\in\mathcal{J}$ measurable partitions of $X$ with $\mu(A_i^{(j)})=a_i^{(j)}$  for each $i\in\mathcal{I}$, $j\in\mathcal{J}$. If $$\sum_{j=1}^JH_\mu(\alpha^{(j)})-H_\mu\left(\bigvee_{j=1}^J\alpha^{(j)}\right)<\delta,$$ then $$|\mu(\cap_{j=1}^JA_{i_j}^{(j)})-\Pi_{j=1}^Ja_{i_j}^{(j)}|\le \epsilon$$
    	for all $(i_1,i_2,\cdots,i_J)\in\mathcal{I}^J$.
\end{lemma}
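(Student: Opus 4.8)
The plan is to exploit the fact that the entropy defect $\sum_{j=1}^J H_\mu(\alpha^{(j)}) - H_\mu(\bigvee_{j=1}^J \alpha^{(j)})$ is precisely the mutual information of the partitions, so that its smallness forces the joint distribution on $\mathcal{I}^J$ to be close to the product of the marginals. Concretely, writing $p(i_1,\ldots,i_J) = \mu(\cap_{j=1}^J A_{i_j}^{(j)})$ and $q(i_1,\ldots,i_J) = \Pi_{j=1}^J a_{i_j}^{(j)}$, one checks by expanding the definition of $H_\mu$ that
\[
\sum_{j=1}^J H_\mu(\alpha^{(j)}) - H_\mu\Big(\bigvee_{j=1}^J \alpha^{(j)}\Big) = \sum_{(i_1,\ldots,i_J)\in\mathcal{I}^J} p(i_1,\ldots,i_J)\log\frac{p(i_1,\ldots,i_J)}{q(i_1,\ldots,i_J)} = D(p\,\|\,q),
\]
the Kullback--Leibler divergence of $p$ from $q$. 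So the hypothesis says $D(p\|q) < \delta$.

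The first step is to record this identity (it is a completely routine computation: $-\sum_i p\log p$ for the join, minus $\sum_j (-\sum_{i} a_i^{(j)}\log a_i^{(j)})$, regrouped over the cells of the join). The second step is to invoke Pinsker's inequality — or, to avoid the name clash with the Pinsker $\sigma$-algebra in the paper, simply cite the standard inequality $\|p-q\|_1^2 \le 2\, D(p\|q)$ relating $L^1$ distance of probability vectors to KL divergence, which holds for any two probability vectors on a finite set. This gives $\sum_{(i_1,\ldots,i_J)} |p(i_1,\ldots,i_J) - q(i_1,\ldots,i_J)| \le \sqrt{2\delta}$, hence in particular $|p(i_1,\ldots,i_J) - q(i_1,\ldots,i_J)| \le \sqrt{2\delta}$ for every single index tuple. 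The third step is just to choose $\delta = \epsilon^2/2$; then the conclusion $|\mu(\cap_{j=1}^J A_{i_j}^{(j)}) - \Pi_{j=1}^J a_{i_j}^{(j)}| \le \epsilon$ follows for all $(i_1,\ldots,i_J)\in\mathcal{I}^J$. Note that $\delta$ depends only on $\epsilon$ (not even on $I$, $J$, or the numbers $a_i^{(j)}$), which is more than the lemma asks for.

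The only genuinely delicate point is the KL-divergence identity, and specifically the convention issue when some $a_i^{(j)} = 0$: then the corresponding cell $A_i^{(j)}$ has measure zero, so $p$ assigns zero mass to any tuple involving index $i$ in coordinate $j$, and the terms $0\log(0/0)$ are read as $0$ in the usual way, so the identity still goes through. An alternative to the whole argument, if one prefers to stay entirely elementary and avoid quoting Pinsker's inequality, is a compactness argument: the space of tuples $(\mu(A_i^{(j)}) \text{ fixed})$ of joint distributions on $\mathcal{I}^J$ with the prescribed marginals is compact, the entropy defect is continuous on it and vanishes exactly on the product distribution, so for each $\epsilon$ there is a $\delta$; but this gives a non-explicit $\delta$ and is really just a disguised version of the same fact, so I would go with the KL-divergence route. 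The main obstacle is therefore purely bookkeeping — getting the mutual-information identity written cleanly with the zero-probability cells handled — rather than anything substantive.
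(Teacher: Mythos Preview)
Your argument is correct, and in fact cleaner than the paper's. You identify the entropy defect as the Kullback--Leibler divergence $D(p\Vert q)$ of the joint distribution $p$ from the product $q$ of the prescribed marginals, then apply Pinsker's inequality $\|p-q\|_1\le\sqrt{2D(p\Vert q)}$ to get the pointwise bound with the explicit choice $\delta=\epsilon^2/2$. The absolute-continuity hypothesis for Pinsker is automatic here, since $q(i_1,\dots,i_J)=0$ forces some $a_{i_j}^{(j)}=0$, whence $p(i_1,\dots,i_J)\le\mu(A_{i_j}^{(j)})=0$.

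The paper takes exactly the route you sketch and then set aside as the ``alternative'': it argues by compactness and contradiction. It views the set $\Lambda$ of joint distributions on $\mathcal{I}^J$ with the prescribed marginals as a compact subset of $[0,1]^{I^J}$, proves (as ``Fact~1'') that the continuous function $F(t)=-\sum t\log t$ attains the value $\sum_j H_\mu(\alpha^{(j)})$ on $\Lambda$ only at the product point, and then extracts a convergent subsequence from a hypothetical sequence of counterexamples to reach a contradiction. This yields a non-explicit $\delta$ that a priori depends on $I$, $J$, and the numbers $a_i^{(j)}$. Your approach is shorter, gives an explicit constant, and shows the stronger fact that $\delta$ can be taken uniform in all the data; the paper's approach has the modest advantage of being entirely self-contained (it does not quote Pinsker's inequality), which may be why the authors chose it.
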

\begin{proof}Define a continuous function
	$$F((t_{i_1,i_2,\cdots,i_J})_{i_1,i_2,\cdots,i_J\in \mathcal{I}})=-\sum_{i_1,i_2,\cdots,i_J\in \mathcal{I}}t_{i_1,i_2,\cdots,i_J}\log t_{i_1,i_2,\cdots,i_J}$$
	on a compact set
	\begin{align*}
	\Lambda=\{(t_{i_1,i_2,\cdots,i_J})_{i_1,i_2,\cdots,i_J\in \mathcal{I}}:t_{i_1,i_2,\cdots,i_J}\in[0,1],
	\sum_{i_{j}=i}t_{i_1,i_2,\cdots,i_J}=a_i^{(j)},i\in\mathcal{I},j\in\mathcal{J}\}.
	\end{align*}	
	 One has the following fact.
\begin{fact}\label{f1}
	For any $(t_{i_1,i_2,\cdots,i_J})_{i_1,i_2,\cdots,i_J\in \mathcal{I}}\in \Lambda$,
$$F((t_{i_1,i_2,\cdots,i_J})_{i_1,i_2,\cdots,i_J\in \mathcal{I}})=-\sum_{j=1}^J\sum_{i=1}^Ia_i^{(j)}\log a_i^{(j)},$$ implies
$$t_{i_1,i_2,\cdots,i_J}=\Pi_{j=1}^Ja_{i_j}^{(j)}\text{ for each }i_1,i_2,\cdots,i_J\in\mathcal{I}.$$
\end{fact}
\begin{proof}[Proof of Fact 1]
	Let $\phi(x)=\left\{
\begin{array}{lr}
-x\log x,                 & x>0\\
0,                  & x=0\\
\end{array}
\right. $.
Then by the concavity of $\phi(x)$, one has for any $i_J\in\mathcal{I}$,
\begin{equation}\label{44}
\begin{split}
-\sum_{i_1,i_2,\cdots,i_{J-1}\in \mathcal{I}}t_{i_1,i_2,\ldots,i_J}\log{\frac{t_{i_1,i_2,\ldots,i_J}}{\Pi_{j=1}^{J-1}a_{i_j}^{(j)}}}=&\sum_{i_1,i_2,\cdots,i_{J-1}\in \mathcal{I}}\Pi_{j=1}^{J-1}a_{i_j}^{(j)}\phi\lk({\frac{t_{i_1,i_2,\ldots,i_J}}{\Pi_{j=1}^{J-1}a_{i_j}^{(j)}}}\re)\\
\leq&\phi({\sum_{i_1,i_2,\cdots,i_{J-1}\in \mathcal{I}}{t_{i_1,i_2,\ldots,i_J}}})\\
=&\phi(a_{i_J}^{(J)})=-a_{i_J}^{(J)}\log{a_{i_J}^{(J)}}
\end{split}
\end{equation}
This implies that
\begin{align}
F((t_{i_1,i_2,\cdots,i_J})&_{i_1,i_2,\cdots,i_J\in \mathcal{I}})=-\sum_{i_J\in\mathcal{I}}\lk(\sum_{i_1,i_2,\cdots,i_{J-1}\in \mathcal{I}}t_{i_1,i_2,\ldots,i_J}\log{{t_{i_1,i_2,\ldots,i_J}}}\re)\notag\\
\leq&\sum_{i_J\in\mathcal{I}}\lk(-\sum_{i_1,i_2,\cdots,i_{J-1}\in \mathcal{I}}t_{i_1,i_2,\ldots,i_J}\log{{\Pi_{j=1}^{J-1}a_{i_j}^{(j)}}}-a_{i_J}^{(J)}\log{a_{i_J}^{(J)}}\re)\notag\\
=&-\sum_{s=1}^{J-1}\sum_{i_s\in \mathcal{I}}\sum_{i_1,\cdots,i_{s-1},i_{s+1},\cdots i_{J}\in \mathcal{I}}t_{i_1,i_2,\ldots,i_J}\log{{a_{i_s}^{(s)}}}-\sum_{i_J\in\mathcal{I}}a_{i_J}^{(J)}\log{a_{i_J}^{(J)}}\label{66}\\
=&-\sum_{s=1}^{J-1}\sum_{i_s\in \mathcal{I}}{a_{i_s}^{(s)}}\log{{a_{i_s}^{(s)}}}-\sum_{i_J\in\mathcal{I}}a_{i_J}^{(J)}\log{a_{i_J}^{(J)}}\notag\\
=&-\sum_{j=1}^J\sum_{i=1}^Ia_i^{(j)}\log a_i^{(j)}.\notag
\end{align}
Combining the assumption that $F((t_{i_1,i_2,\cdots,i_J})_{i_1,i_2,\cdots,i_J\in \mathcal{I}})=-\sum_{j=1}^J\sum_{i=1}^Ia_i^{(j)}\log a_i^{(j)}$, \eqref{44} and \eqref{66}, one has
$$\sum_{i_1,i_2,\cdots,i_{J-1}\in \mathcal{I}}\Pi_{j=1}^{J-1}a_{i_j}^{(j)}\phi\lk({\frac{t_{i_1,i_2,\ldots,i_J}}{\Pi_{j=1}^{J-1}a_{i_j}^{(j)}}}\re)=\phi(a_{i_J}^{(J)})$$
for each $i_J\in\mathcal{I}$. This implies that$$a_{i_J}^{(J)}=\frac{t_{i_1,i_2,\ldots,i_J}}{\Pi_{j=1}^{J-1}a_{i_j}^{(j)}}$$
for each $i_1,\cdots,i_J\in\mathcal{I}$. We finish the proof of Fact 1.
\end{proof}
Next we show that the lemma is true.  By contradiction, we may assume that there exist $\epsilon>0$ and 
	  partitions $\{\alpha_{n}^{(j)}=\{A_{1,n}^{(j)},A_{2,n}^{(j)},\cdots,A_{I,n}^{(j)}\}\}_{n\geq 1,j\in\mathcal{J}}$ of $X$ such that $\mu(A_{i,n}^{(j)})=a_i^{(j)}$  for each $i,j,n$,
	  $$\max_{i_1,\cdots,i_J\in\mathcal{I}}|\mu(\cap_{j=1}^JA_{i_j,n}^{(j)})-\Pi_{j=1}^Ja_{i_j}^{(j)}|\ge 2\epsilon,$$
	  for each $n$ and $$\lim_{n\to\infty}\sum_{j=1}^JH_\mu(\alpha_n^{(j)})-H_\mu\left(\bigvee_{j=1}^J\alpha^{(j)}_n\right)=0.$$
	  Put  $t_{i_1,i_2,\cdots,i_J}^{(n)}=\mu(\cap_{j=1}^JA_{i_j,n}^{(j)})$, then
	  $$(t^{(n)}_{i_1,i_2,\cdots,i_J})_{i_1,i_2,\cdots,i_J\in \mathcal{I}}\in \Lambda,$$
 $$\max_{i_1,\cdots,i_J\in\mathcal{I}}|t_{ i_1, i_2,\cdots, i_J}^{(n)}-\Pi_{j=1}^Ja_{ i_j}^{(j)}|\ge2 \epsilon,$$
 and 
$$\lim_{n\to \infty}F((t^{(n)}_{i_1,i_2,\cdots,i_J})_{i_1,i_2,\cdots,i_J\in \mathcal{I}})=-\sum_{j=1}^J\sum_{i}^{I}a_i^{(j)}\log a_i^{(j)}.$$
Since $\Lambda$ is  compact, passing by a subsequence of $((t^{(n)}_{i_1,i_2,\cdots,i_J})_{i_1,i_2,\cdots,i_J\in \mathcal{I}})_{n\ge 1}$ if necessary, we assume that $$\lim_{n\to\infty} t^{(n)}_{i_1,i_2,\cdots,i_J}=t_{i_1,i_2,\cdots,i_J},$$
	for each ${i_1,i_2,\cdots,i_J\in\mathcal{I}}$.  It is easy to see that 
	$$(t_{i_1,i_2,\cdots,i_J})_{i_1,i_2,\cdots,i_J\in \mathcal{I}}\in \Lambda,$$
	Then by the continuity of $F$, one has
	$$F((t_{i_1,i_2,\cdots,i_J})_{i_1,i_2,\cdots,i_J\in \mathcal{I}})=-\sum_{j=1}^J\sum_{i=1}^Ia_i^{(j)}\log a_i^{(j)},$$
and $$\max_{i_1,\cdots,i_J\in\mathcal{I}}|t_{ i_1, i_2,\cdots, i_J}-\Pi_{j=1}^Ja_{ i_j}^{(j)}|\ge2\epsilon,$$
	which contradicts Fact \ref{f1}. So Lemma \ref{lem-1} is true.
\end{proof}
With the help of Lemma \ref{lem-1}, we obtain the following result.
\begin{lemma}\label{lemma3}
	Let $(X,\mathcal{B}_X^\mu,\mu, G)$ be an ergodic $G$-mps.  For any $ f\in L^{2}(X,\B_{X}^\mu,\mu)$ and $ \epsilon>0$, there exist $K\in F(G)$ and $N\in \N$ such that for every  $Q\in F(G)$ with $(QQ^{-1} \backslash \{e_G\})\cap K=\emptyset$ and $|Q|\ge N$, 
	 $$\left\|\textbf{A}(Q,f)-\textbf{A}(Q,\mathbb{E}(f|\mathcal{P}_{\mu}(G)))\right\|_2<\epsilon.$$
\end{lemma}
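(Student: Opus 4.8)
\textbf{Proof proposal for Lemma \ref{lemma3}.}

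The plan is to reduce the $L^2$-statement about a general $f$ to a statement about simple functions, and then to apply the ergodic-theoretic input (Theorem \ref{6}, in its rewritten form \eqref{5}) together with the elementary combinatorial Lemma \ref{lem-1}. First I would fix $f\in L^2(X,\B_X^\mu,\mu)$ and $\epsilon>0$, and approximate $f$ in $L^2$ by a finite linear combination of indicator functions, i.e. by a function $\varphi=\sum_{i=1}^I c_i\mathbbm{1}_{A_i}$ that is measurable with respect to a fixed finite partition $\alpha=\{A_1,\dots,A_I\}$, so that $\|f-\varphi\|_2$ is small (say $<\epsilon/4$). Since conditional expectation onto $\mathcal{P}_\mu(G)$ is an $L^2$-contraction and the averaging operators $\textbf{A}(Q,\cdot)$ are $L^2$-contractions as well (by convexity/Jensen, using that $G$ preserves $\mu$), this reduces the problem to proving the estimate for $\varphi$, that is, for a function measurable with respect to a single finite partition $\alpha$; the constants $c_i$ and the values $a_i=\mu(A_i)$ are now fixed data.

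Next, the key point is to translate ``$\textbf{A}(Q,\varphi)$ is close to $\textbf{A}(Q,\mathbb{E}(\varphi|\mathcal{P}_\mu(G)))$ in $L^2$'' into a statement about how the translates $g^{-1}\alpha$, $g\in Q$, interact. Working over the disintegration $\mu=\int\mu_y\,d\mu(y)$ with respect to $\mathcal{P}_\mu(G)$, one has $\mathbb{E}(\varphi|\mathcal{P}_\mu(G))(y)=\int\varphi\,d\mu_y$, and the $L^2$-difference can be computed fibrewise: $\|\textbf{A}(Q,\varphi)-\textbf{A}(Q,\mathbb{E}(\varphi|\mathcal{P}_\mu(G)))\|_2^2=\int \mathrm{Var}_{\mu_y}\big(\textbf{A}(Q,\varphi)\big)\,d\mu(y)$, where $\mathrm{Var}_{\mu_y}$ denotes variance with respect to $\mu_y$. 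Expanding the variance of $\frac1{|Q|}\sum_{g\in Q}\varphi\circ g$ produces a double sum over $g,h\in Q$ of covariances $\mathrm{Cov}_{\mu_y}(\varphi\circ g,\varphi\circ h)$, and each such covariance is a linear combination (with coefficients built from the $c_i$) of the quantities $\mu_y(g^{-1}A_{i}\cap h^{-1}A_{i'})-\mu_y(g^{-1}A_i)\mu_y(h^{-1}A_{i'})$, i.e. of the deficit from independence of the two-fold joined partition $g^{-1}\alpha\vee h^{-1}\alpha$ under $\mu_y$. Thus it suffices to show that, for most pairs $g,h\in Q$ and most $y$, the partitions $g^{-1}\alpha$ and $h^{-1}\alpha$ are nearly independent under $\mu_y$; the diagonal terms $g=h$ contribute only $O(1/|Q|)$, which is controlled by taking $|Q|\ge N$ large.

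To get the near-independence I would invoke Theorem \ref{6} and its fibrewise reformulation \eqref{5}: choose the partition $\alpha$ and the tolerance $\epsilon'$ (to be specified, depending on $\epsilon$ and on the $c_i$) and let $K$ be the finite set produced by Theorem \ref{6}. Then for any $Q\in F(G)$ with $(QQ^{-1}\setminus\{e_G\})\cap K=\emptyset$, applying \eqref{5} to the partition $\bigvee_{g\in Q}g^{-1}\alpha$ — whose pieces still have the same measures $a_i$ as $\alpha$ fibrewise is not quite right, so more carefully I apply Theorem \ref{6} to $\alpha$ and to two-element subsets $\{g,h\}\subset Q$, using that $\{g,h\}\{g,h\}^{-1}\setminus\{e_G\}=\{gh^{-1},hg^{-1}\}$ misses $K$ — we obtain that $H_{\mu_y}(g^{-1}\alpha)+H_{\mu_y}(h^{-1}\alpha)-H_{\mu_y}(g^{-1}\alpha\vee h^{-1}\alpha)$ is small on average in $y$. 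Here Lemma \ref{lem-1}, with $J=2$, converts this entropy-gap smallness into the smallness of $|\mu_y(g^{-1}A_i\cap h^{-1}A_{i'})-a_i a_{i'}|$ (note $\mu_y(g^{-1}A_i)=\mu_y(h^{-1}A_i)$ need not equal $a_i$, so I would run Lemma \ref{lem-1} with the actual fibre measures of the pieces of $g^{-1}\alpha$, $h^{-1}\alpha$ as the data $a_i^{(j)}$, observing its conclusion is uniform over all admissible such data once $\epsilon$ is fixed — this uniformity is exactly what Lemma \ref{lem-1} provides). Summing the resulting fibrewise covariance bounds over $g,h\in Q$, dividing by $|Q|^2$, integrating in $y$, and adding the $O(1/|Q|)$ diagonal contribution and the $\epsilon/4$ approximation errors yields the claim, provided $\epsilon'$ was chosen small enough and $N$ large enough in terms of $I$, $\max|c_i|$ and $\epsilon$.

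\textbf{Main obstacle.} The delicate step is the bookkeeping in converting the averaged entropy gap (which Theorem \ref{6} gives for a single pair, or for the whole block $Q$) into a bound on the \emph{average over all pairs} $g,h\in Q$ of the fibrewise covariances: one must either apply Theorem \ref{6} pairwise and sum $|Q|^2$ small quantities (fine, since each is small uniformly and the count is cancelled by the $1/|Q|^2$), or extract the pairwise information from the single inequality for $\bigvee_{g\in Q}g^{-1}\alpha$ via superadditivity of conditional entropy — and one must track that Lemma \ref{lem-1}'s $\delta$ can be chosen uniformly over the (varying with $g,h,y$) measures of the partition pieces, which is why the precise quantifier structure of Lemma \ref{lem-1} (the $\delta$ depends only on $\epsilon$ and the prescribed marginals, and by a further compactness/continuity argument can be made uniform over marginals in a compact set) matters. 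Everything else is routine $L^2$ Hilbert-space estimation.
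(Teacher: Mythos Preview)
Your overall architecture matches the paper's: reduce to simple functions, expand $\|\textbf{A}(Q,f)-\textbf{A}(Q,\mathbb{E}(f|\mathcal{P}_\mu(G)))\|_2^2$ fibrewise as a double sum of covariances $\mathrm{Cov}_{\mu_y}(\varphi\circ g,\varphi\circ h)$, control the diagonal by $|Q|\ge N$, apply Theorem~\ref{6} in its fibre form \eqref{5} to each pair $\{g,h\}\subset Q$, and then use Lemma~\ref{lem-1} with $J=2$ to pass from a small entropy gap to near-independence. That skeleton is exactly what the paper does.

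The one substantive divergence is in how you deal with the fact that the fibre marginals $\mu_y(g^{-1}A_i)$ vary with $y$ and $g$. You first write that ``this uniformity is exactly what Lemma~\ref{lem-1} provides'', which is not correct as stated: in Lemma~\ref{lem-1} the $\delta$ is chosen \emph{after} the marginals $a_i^{(j)}$. You then catch this in your ``Main obstacle'' paragraph and propose a further compactness argument to make $\delta$ uniform over all marginals; that strengthening is indeed true (run the same contradiction argument on the compact set of all pairs $((a_i^{(j)}),(t_{i_1,\dots,i_J}))$ with consistent marginals), so your route can be completed. The paper, however, avoids this altogether by a different reduction: it does \emph{not} approximate $f$ by an arbitrary simple function, but uses Lemma~\ref{lemma2} (which relies on the non-atomicity of $\mu_y$ from Lemma~\ref{3}) to write $f$ as an $L^2$-close combination of indicators $1_A$ with the special property that $\mu_y(A)\in\{0,q\}$ for some fixed $q>0$. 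For such $A$, on the event $gy,hy\in B$ the marginals of $g^{-1}\alpha,h^{-1}\alpha$ under $\mu_y$ are exactly $(q,1-q)$, so Lemma~\ref{lem-1} applies verbatim with fixed data; off that event the covariance contribution vanishes. This buys the paper a cleaner application of Lemma~\ref{lem-1} at the cost of invoking Lemma~\ref{lemma2}; your route trades that lemma for a (straightforward) uniform upgrade of Lemma~\ref{lem-1}.
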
 

\begin{proof} If $h_\mu(G)=0$, then  $\mathcal{P}_\mu(G)=\mathcal{B}_X^\mu$ and hence $\mathbb{E}(f|\mathcal{P}_\mu(G))(x)=f(x)$ for $\mu$-a.e. $x\in X$ for any $f\in L^{2}(X,\B_{X}^\mu,\mu)$. So we have nothing to prove. 

Now we assume that $h_\mu(G)>0$. Let $\mu=\int \mu_yd\mu(y)$ be the disintegration of $\mu$ with respect to $\mathcal{P}_{\mu}(G)$. First, we show that the lemma is true for special functions $f=1_A\in L^2(X,\B_{X}^\mu,\mu)$, where $A$ satisfy that there exists $ B\in \mathcal{P}_{\mu}(G)$ and $q>0$ such that for $\mu$-a.e. $y\in B$, $\mu_{y}(A)=q$ and for $\mu$-a.e. $y\in B^{c}$, $\mu_{y}(A)=0$.

Fix $\epsilon>0$. By Lemma \ref{lem-1}, there exists $\delta\in(0,\epsilon^2/8)$ such that the following holds: for any $\rho\in\mathcal{M}(X)$,  if 
 finite measurable partitions $\beta_i=\{B_i,B_i^c\}$ of $(X,\mathcal{B}_X^\rho,\rho)$, $i=1,2$ satisfy  $$\rho(B_1)=\rho(B_2)=q$$ and $$H_{\rho}(\beta_1)+H_{\rho}(\beta_2)-H_{\rho}(\beta_1\vee\beta_2)<\delta,$$ 
then 
\begin{align}\label{e-3}
|\rho(B_1\cap B_2)-q^2|\le \epsilon^2/4.
\end{align}

Applying Theorem \ref{6} to $(X,\mathcal{B}_X^\mu,\mu,G)$ and the partition $\alpha=\{A,A^c\}$, we can find a finite subset $K$ of $G$ such that for any  $F\in F(G)$ with $(FF^{-1} \backslash \{e_G\})\cap K=\emptyset$, one has 
\begin{align}\label{e-4}	\int \left|\frac{1}{|F|}\sum_{g\in F}H_{\mu_{y}}(g^{-1}\alpha) - \frac{1}{|F|}H_{\mu_{y}}(\bigvee_{g\in F}g^{-1}\alpha)\right|d\mu(y)<\delta^{2}.\end{align}
Fix $N\in\mathbb{N}$ such that $1/{N}< \epsilon^2/4$ and we are going to show that  the subset $K$ of $X$ and $N\in\mathbb{N}$ are as required.
Let $Q$ be a finite subset of $G$ with
$|Q|>N$ and $ QQ^{-1}\setminus\{e_G\}\cap K=\emptyset.$ 
Then 
\begin{align}
&\left\|\textbf{A}(Q,f)-\textbf{A}(Q,\mathbb{E}(f|\mathcal{P}_{\mu}(G)))\right\|_2^2\notag\\
&=\frac{1}{|Q|^2}\sum_{g\in Q}\left\|f(gx)-\mathbb{E}(f|\mathcal{P}_{\mu}(G))(gx)\right\|_2^2+\notag\\
&\frac{1}{|Q|^2}\sum_{g\in Q,h\in Q, g\neq h}\lk(\int f(gx)f(hx)d\mu(x)-\int \mathbb{E}(f|\mathcal{P}_{\mu}(G)(gx)\mathbb{E}(f|\mathcal{P}_{\mu}(G)(hx)d\mu(x)\re)\notag\\
&=\frac{1}{|Q|^2}\sum_{g\in Q}\left\|f(gx)-\mathbb{E}(f|\mathcal{P}_{\mu}(G))(gx)\right\|_2^2+\label{22}\\
&\frac{1}{|Q|^2}\sum_{g\in Q,h\in Q, g\neq h}\int \int f(gx)f(hx)- \mathbb{E}(f|\mathcal{P}_{\mu}(G))(gx)\mathbb{E}(f|\mathcal{P}_{\mu}(G))(hx)d\mu_y(x)d\mu(y)\notag.
\end{align}
Clearly,
\begin{align}\label{e-1}
\frac{1}{|Q|^2}\sum_{g\in Q}\left\|f(gx)-\mathbb{E}(f|\mathcal{P}_{\mu}(G))(gx)\right\|_2^2\le \frac{2|Q|}{|Q|^2}< \epsilon^2/2.
\end{align}
So we only need to consider the second item in \eqref{22}, denoted by $\Pi$. By the properties of $f=1_A$, one has for $\mu$-a.e. $y\in X$, for $\mu_y$-a.e. $x\in X$
\begin{itemize}
	\item if $gx\in B^c$ or $hx\in B^c$, then  $$f(gx)f(hx)- \mathbb{E}(f|\mathcal{P}_{\mu}(G))(gx)\mathbb{E}(f|\mathcal{P}_{\mu}(G))(hx)=0;$$
	\item if $gx\in B$ and $hx\in B$, then
	$$f(gx)f(hx)- \mathbb{E}(f|\mathcal{P}_{\mu}(G))(gx)\mathbb{E}(f|\mathcal{P}_{\mu}(G))(hx)=1_A(gx)1_A(hx)-q^2.$$
\end{itemize}
Rewrite the second item in \eqref{22} by $$\Pi=\frac{1}{|Q|^2}\sum_{g,h\in Q, g\neq h}\int_{g^{-1}B\cap h^{-1}B}\mu_{y}(g^{-1}A\cap h^{-1}A)-q^2d\mu(y).$$
For each pair $(g,h)\in Q\times Q$ with $g\neq h$, we set $$D_1(g,h)=\{y\in X: gy\notin B\text{ or } hy\notin B\},$$
and $$D(g,h)=\{y\in X: |H_{\mu_{y}}(g^{-1}\alpha)+H_{\mu_{y}}(h^{-1}\alpha) - H_{\mu_{y}}(g^{-1}\alpha\vee h^{-1}\alpha)|<\delta\}.$$
Then
\begin{align}\label{DH}
g^{-1}B\cap h^{-1}B\subset \left(D(g,h)\setminus D_1(g,h)\right)\cup\left(X\setminus D(g,h)\right).
\end{align}
By \eqref{e-4}, \begin{align}\label{equi-0}
	\mu(D(g,h))>1-2\delta.
\end{align} 
For $\mu$-a.e $y\in D(g,h)\setminus D_1(g,h)$, one has 
$$\mu_y(g^{-1}A)=\mu_{gy}(A)=q\text{, }\mu_y(h^{-1}A)=\mu_{hy}(A)=q$$
and 
$$|H_{\mu_{y}}(g^{-1}\alpha)+H_{\mu_{y}}(h^{-1}\alpha) - H_{\mu_{y}}(g^{-1}\alpha\vee h^{-1}\alpha)|<\delta.$$
Combining \eqref{e-3} and the above fact, for $\mu$-a.e $y\in D(g,h)\setminus D_1(g,h)$,
\begin{align}\label{e-1-1}
|\mu_y(g^{-1}A\cap h^{-1} A)-q^2|\le \epsilon^2/4.	
\end{align} Hence we can estimate the second item $\Pi$ in \eqref{22} as follows.
\begin{equation}\label{33}
\begin{split}
|\Pi|&=\left|\frac{1}{|Q|^2}\sum_{g,h\in Q, g\neq h}\int_{g^{-1}B\cap h^{-1}B} \mu_{y}(g^{-1}A\cap h^{-1}A)-q^2d\mu(y)\right|\\
&\overset{\eqref{DH}}\leq\frac{1}{|Q|^2}\sum_{g\in Q,h\in Q, g\neq h}\int_{ D(g,h)\setminus D_1(g,h)}\left| \mu_{y}(g^{-1}A\cap h^{-1}A)-q^2\right|d\mu(y)+\\
&\ \ \  \frac{1}{|Q|^2}\sum_{g\in Q,h\in Q, g\neq h}\int_{X\setminus D(g,h)} \left|\mu_{y}(g^{-1}A\cap h^{-1}A)-q^2\right|d\mu(y)\\
&\overset{\eqref{e-1-1}}\le \frac{1}{|Q|^2}\sum_{g\in Q,h\in Q, g\neq h}\int_{ D(g,h)\setminus D_1(g,h)} \frac{\epsilon^2}{4} d\mu(y)+\\
& \ \ \ \frac{1}{|Q|^2}\sum_{g\in Q,h\in Q, g\neq h}\int_{X\setminus D(g,h)}1d\mu(y)\\
&\overset{\eqref{equi-0}}\le \epsilon^2/4+2\delta\le \epsilon^2/2.
\end{split}
\end{equation}
Combining \eqref{22}, \eqref{e-1}and \eqref{33}, one has 
$$\left\|\textbf{A}(Q,f)-\textbf{A}(Q,\mathbb{E}(f|\mathcal{P}_{\mu}(G)))\right\|_2<\epsilon.$$

Now we are going to show that the lemma is true for all $f\in L^2(X,\mathcal{B}_X^\mu,\mu)$. Fix $f\in L^2(X,\mathcal{B}_X^\mu,\mu)$ and $\epsilon>0$. 
By Lemma \ref{lemma2} and Lemma \ref{3}, there exist finitely many measurable subsets $A_1,\ldots,A_k$ of $X$ and real constants $a_1,\cdots,a_k$
  such that $$\|f-\sum_{i=1}^ka_i1_{A_i}\|_2<\epsilon/3$$
  and there exist $B_i\in\mathcal{P}_{\mu}(G)$ with $\mu(B_i)>0$ and $q_i>0$ such that for $\mu$-a.e. $y\in B_i$, $\mu_y(A_i)=q_i$  and for $\mu$-a.e. $y\in B_i^c$, $\mu_y(A_i)=0$ for each $i\in\{1,2,\ldots,k\}$. By the above proof, for each $i=1,2,\ldots,k$, there exists $K_i\in F(G)$ and $N_i\in\mathbb{N}$ such that for every $Q\in F(G)$ with $|Q|>N_i$ and $ (QQ^{-1}\setminus\{e_G\})\cap K_i=\emptyset$,
$$\left\|\textbf{A}(Q,1_{A_i})-\textbf{A}(Q,\mathbb{E}(1_{A_i}|\mathcal{P}_{\mu}(G)))\right\|_2<\epsilon/(3a_i(k+1)).$$ 
 Let $K=\cup_{i=1}^kK_i\in F(G)$ and $N=\max_{1\leq i\leq k}\{N_i\}$. Then for any $Q\in F(G)$ with $|Q|>N$ and $ QQ^{-1}\setminus\{e_G\}\cap K=\emptyset$, one has $|Q|>N_i$ and $ (QQ^{-1}\setminus\{e_G\})\cap K_i=\emptyset$ for each $i=1,2,\ldots,k$ and hence 
\begin{equation*}
\begin{split}
&\left\|\textbf{A}(Q,f)-\textbf{A}(Q,\mathbb{E}(f|\mathcal{P}_{\mu}(G)))\right\|_2\\
\leq&\|\textbf{A}(Q,f)-\textbf{A}(Q,\sum_{i=1}^ka_i1_{A_i})\|_2+\sum_{i=1}^ka_i\|\textbf{A}(Q,1_{A_i})-\textbf{A}(Q,\mathbb{E}(1_{A_i}|\mathcal{P}_{\mu}(G)))\|_2\\
&\qquad+\|\textbf{A}(Q,\mathbb{E}(f|\mathcal{P}_{\mu}(G)))-\textbf{A}(Q,\mathbb{E}(\sum_{i=1}^ka_i1_{A_i}|\mathcal{P}_{\mu}(G)))\|_2\\
\leq&\|f-\sum_{i=1}^ka_i1_{A_i}\|_2+k\cdot(\epsilon/(3(k+1)))+\|f-\sum_{i=1}^ka_i1_{A_i}\|_2\\
<&\epsilon/3+\epsilon/3+\epsilon/3=\epsilon.
\end{split}
\end{equation*}
This finishes the proof of Lemma \ref{lemma3}.
\end{proof}
To prove Theorem \ref{Thm-A}, we need the following observation.
\begin{lemma}\label{lem-2}Let $G$ be a group, $K,Q\in F(G)$ and $N\in\mathbb{N}$. If $|Q|\ge 2|K|N$, then there is $Q_1\subset Q$ such that $|Q_1|=N$ and $(Q_1Q_1^{-1}\setminus\{e_G\})\cap K=\emptyset.$ 
\end{lemma}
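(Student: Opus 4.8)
The plan is to construct $Q_1$ greedily, adding one element at a time while maintaining the required separation. First I would restate the target in a usable form: because $ab^{-1}=e_G$ only when $a=b$, the set $Q_1Q_1^{-1}\setminus\{e_G\}$ is exactly $\{ab^{-1}:a,b\in Q_1,\ a\neq b\}$, so the condition $(Q_1Q_1^{-1}\setminus\{e_G\})\cap K=\emptyset$ is equivalent to requiring $ab^{-1}\notin K$ for every pair of \emph{distinct} points $a,b\in Q_1$. It thus suffices to produce $N$ distinct points $q_1,\dots,q_N\in Q$ with $q_sq_t^{-1}\notin K$ whenever $s\neq t$.

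Next I would set up the induction. Suppose $q_1,\dots,q_j\in Q$ have already been chosen with $q_sq_t^{-1}\notin K$ for all $s\neq t\le j$, where $j<N$. A candidate $q\in Q$ fails to extend the family precisely when $qq_i^{-1}\in K$ or $q_iq^{-1}\in K$ for some $i\le j$; the first says $q\in Kq_i$ and the second says $q\in K^{-1}q_i$ (from $q_iq^{-1}=k$ one gets $q=k^{-1}q_i$). Hence the forbidden candidates form $B_j=\bigcup_{i=1}^{j}\left(Kq_i\cup K^{-1}q_i\right)$, and since each translate $Kq_i$ and $K^{-1}q_i$ has at most $|K|$ elements, $|B_j|\le 2|K|j\le 2|K|(N-1)<2|K|N\le|Q|$. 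Therefore $Q\setminus B_j\neq\emptyset$, and I would pick any $q_{j+1}$ from it; iterating until $j=N$ yields the desired $Q_1$.

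The step I expect to be the main obstacle is the \emph{tightness} of this count. The estimate $|B_j|\le 2|K|j$ controls the genuine conflicts, but to reach $|Q_1|=N$ one also needs $q_{j+1}$ to be distinct from $q_1,\dots,q_j$. Each previously chosen $q_i$ lies in $B_j$ exactly when $e_G\in K$ (since $q_i\in Kq_i$ iff $e_G\in K$), so when $e_G\in K$ distinctness is automatic and the stated bound — in fact already $2|K|(N-1)<|Q|$ — is precisely what the induction consumes. The delicate case is $e_G\notin K$, where forcing distinctness appears to cost an extra point per step, nudging the count toward $(2|K|+1)j$; reconciling this with the clean bound $2|K|N$ is the point I would concentrate on, either by an amortized accounting of how the translates $Kq_i$, $K^{-1}q_i$ and the chosen points $q_i$ overlap, or by first reducing to the case $e_G\in K$ (noting that the conclusion $(Q_1Q_1^{-1}\setminus\{e_G\})\cap K=\emptyset$ is insensitive to whether $e_G$ belongs to $K$). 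Once this identity-element bookkeeping is settled, the remainder is the routine translate-counting above.
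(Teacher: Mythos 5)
Your greedy construction is essentially the argument the paper itself uses: the paper takes a maximal subset $\hat Q_1\subset Q$ whose distinct elements $g,h$ satisfy $h\notin(K\cup K^{-1})g$ --- precisely what your induction produces when run to exhaustion --- and then counts translates exactly as you do. But the identity-element bookkeeping you singled out as the remaining obstacle is not something better accounting can settle: when $e_G\notin K$ the lemma is \emph{false} with the constant $2|K|$. Take $G=(\mathbb{Z}/5\mathbb{Z})\times\mathbb{Z}$, $K=\{(1,0),(2,0)\}$, $N=5$, and $Q=(\mathbb{Z}/5\mathbb{Z})\times\{1,2,3,4\}$, so that $|Q|=20=2|K|N$. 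Two distinct points of $Q$ with the same second coordinate differ by $(d,0)$ with $d\in\{1,2,3,4\}$, and one of $(d,0)$, $(5-d,0)$ lies in $K$; since $Q_1Q_1^{-1}$ contains both differences, an admissible $Q_1$ meets each of the four sets $(\mathbb{Z}/5\mathbb{Z})\times\{j\}$ in at most one point, so $|Q_1|\le 4<N$. This also rules out your proposed reduction to the case $e_G\in K$: adjoining $e_G$ to $K$ leaves the conclusion unchanged, as you note, but the hypothesis $|Q|\ge 2|K|N$ is not insensitive to it --- applied to $K\cup\{e_G\}$ the lemma demands $|Q|\ge 2(|K|+1)N$, which is exactly the loss you were trying to avoid. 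Your counting is in fact sharp: adding the chosen points to the forbidden set requires $(2|K|+1)(N-1)<|Q|$, so the printed statement does hold for all $N\le 2|K|$, and the counterexample above occurs at the first possible value $N=2|K|+1$.

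The flaw is inherited from the paper's own proof of Lemma \ref{lem-2}, which asserts $Q\subset(K\cup K^{-1})\hat Q_1$; this inclusion fails on $\hat Q_1$ itself unless $e_G\in K\cup K^{-1}$, since each point of $\hat Q_1$ is by construction $(K\cup K^{-1})$-separated from every other point of $\hat Q_1$. The correct inclusion is $Q\subset(K\cup K^{-1}\cup\{e_G\})\hat Q_1$, which yields only $|\hat Q_1|\ge |Q|/(2|K|+1)$. So the lemma should be restated with the hypothesis $|Q|\ge(2|K|+1)N$ (or, alternatively, with the extra assumption $e_G\in K$, in which case $|K\cup K^{-1}|\le 2|K|-1$ and the original constant survives). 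Under either form your induction closes with no further ideas, the forbidden set $B_j\cup\{q_1,\dots,q_j\}$ having at most $(2|K|+1)j\le(2|K|+1)(N-1)<|Q|$ elements. The correction is harmless downstream: in the proof of Theorem \ref{Thm-A} one may first enlarge $K$ to $K\cup\{e_G\}$ (condition (4) of the decomposition is unaffected, since $e_G$ is excluded there anyway), and all that is actually used is that the exceptional block $Q_0^{(K,n)}$ has cardinality bounded independently of $n$; replacing the bound $2|K|N$ by $(2|K|+1)N$ leaves the limiting argument untouched.
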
 
\begin{proof}
	 Let $\hat Q_1$ be the largest subset of $Q$ such that if $g,h\in \hat Q_1$, $g\neq h$ then $h\notin (K\cup K^{-1})g$. 
	Then $Q\subset (K\cup K^{-1})\hat Q_1$ and $(\hat Q_1\hat Q_1^{-1}\setminus\{e_G\})\cap K=\emptyset$. This implies 
	$$|\hat Q_1|\ge \frac{|Q|}{|K\cup K^{-1}|}\ge \frac{|Q|}{2|K|}\ge N.$$
	Let $Q_1$ be a subset of $\hat Q_1$ with $|Q_1|=N$. Then $Q_1$ is as required.
\end{proof}
Now we are able to prove Theorem \ref{Thm-A}.
\begin{proof}[Proof of Theorem \ref{Thm-A}]Fix $f\in  L^2(X,\B_{X}^\mu,\mu)$ and $\epsilon>0$. By Lemma \ref{lemma3}, there is $K\in F(G)$ and $N\in\mathbb{N}$ such that   for every  $Q\in F(G)$ with $(QQ^{-1} \backslash \{e_G\})\cap K=\emptyset$ and $|Q|\ge N$, $$\left\|\textbf{A}(Q,f)-\textbf{A}(Q,\mathbb{E}(f|\mathcal{P}_{\mu}(G)))\right\|_2<\epsilon.$$
	

By using Lemma \ref{lem-2} several times,  each $G_n$ can be decomposed  into	
	$$Q_0^{(K,n)},Q_1^{(K,n)},Q_2^{(K,n)},\cdots, Q_{k_n}^{(K,n)}$$ such that
	\begin{itemize}
		\item[(1)] $Q_i^{(K,n)}\cap Q_j^{(K,n)}=\emptyset$ if $i\neq j$;
		\item[(2)] $|Q_0^{(K,n)}|<2|K|N$;
		\item[(3)] $|Q_i^{(K,n)}|=N$ if $i\neq 0$;
		\item[(4)] $(Q_i^{(K,n)}(Q_i^{(K,n)})^{-1}\setminus \{e_G\})\cap K=\emptyset$ if $i\neq 0$.
	\end{itemize}
Hence, 
	\begin{align*}&\left\|\textbf{A}(G_n,f)-\textbf{A}(G_n,\mathbb{E}(f|\mathcal{P}_{\mu}(G)))\right\|_2\\
	\le&\frac{1}{|G_n|}\Big(\sum_{g\in Q_0^{(K,n)}}\|f(gx)-\mathbb{E}(f|\mathcal{P}_\mu(G))(gx)\|_2+\\
	&\ \ \ \ \ \ \ \ \sum_{i=1}^{k_n} \|\sum_{g\in Q_i^{(K,n)}}f(gx)-\sum_{g\in  Q_i^{(K,n)}}\mathbb{E}(f|\mathcal{P}_{\mu}(G))(gx)\|_2\Big)\\
	\le&\frac{1}{|G_n|}\Big(\sum_{g\in Q_0^{(K,n)}}2\|f\|_2+\sum_{i=1}^{k_n} \epsilon N\Big)\\
	\le& \frac{1}{|G_n|}\Big(2N|K|\|f\|_2+|G_n|\epsilon\Big)\\
	\end{align*}
	Let $n\to\infty$. Then $\lim_{n\to\infty}|G_n|=\infty$ and hence one has
	\begin{align*}\limsup_{n\to\infty}\left\|\textbf{A}(G_n,f)-\textbf{A}(G_n,\mathbb{E}(f|\mathcal{P}_{\mu}(G)))\right\|_2\le \epsilon.
	\end{align*}
	Let $\epsilon\to 0$. Then
	\begin{align*}\lim_{n\to\infty}\left\|\textbf{A}(G_n,f)-\textbf{A}(G_n,\mathbb{E}(f|\mathcal{P}_{\mu}(G)))\right\|_2=0.
	\end{align*}
Therefore $\mathcal{P}_\mu(G)$ is a characteristic factor for $(G_n)_{n\ge 1}$.
\end{proof}

\section{Proof of Theorem \ref{thm1} and Theorem \ref{thm2}}
In this section, we introduce some lemma related  to weak convergence and prove Theorem \ref{thm1} and Theorem \ref{thm2}.
 \subsection{Weak convergence} A sequence $\{h_n\}_{n\ge 1}$ in $L^{2}(X,\B_{X}^\mu,\mu)$ converges weakly to $h\in L^{2}(X,\B_{X}^\mu,\mu)$ (write as $h_n\overset{w}\to h$),
if $\lim_{n\to\infty}\int  h_nfd\mu=\int hf d\mu$ for all $f \in L^{2}(X,\B_{X}^\mu,\mu)$.   

\begin{lemma}\label{lemmaA}
	Let $(X,\mathcal{B}_X^\mu,\mu,G)$ be a $G$-mps and $(G_n)_{n\ge 1}\subset F(G)$. Let $\mathcal{C}$ be a $G$-invariant sub-$\sigma$-algebra of $\mathcal{B}_X^\mu$ and $h\in L^{2}(X,\B_{X}^\mu,\mu)$. Assume that there exists  $P(h)\in L^{2}(X,\B_{X}^\mu,\mu)$ such that $$\textbf{A}(G_n,h)\overset{w}\rightarrow P(h).$$ 
	If 
	$$\lim_{n\to\infty}\left\|\textbf{A}(G_n,h)-\textbf{A}(G_n,\mathbb{E}(h|\mathcal{C}))\right\|_2=0,$$
	then
	$$\textbf{A}(G_n,\mathbb{E}(h|\mathcal{C}))\overset{w}\rightarrow  P(h).$$
			Especially, $P(h)\in L^2(X,\mathcal{C},\mu)$.
\end{lemma}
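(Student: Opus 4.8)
\textbf{Proof proposal for Lemma \ref{lemmaA}.}

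The plan is to use the hypothesis $\lim_{n\to\infty}\|\textbf{A}(G_n,h)-\textbf{A}(G_n,\mathbb{E}(h|\mathcal{C}))\|_2=0$ to transfer the weak convergence of $\textbf{A}(G_n,h)$ to $\textbf{A}(G_n,\mathbb{E}(h|\mathcal{C}))$, and then to identify the limit as an element of $L^2(X,\mathcal{C},\mu)$. First I would observe that for any $f\in L^2(X,\mathcal{B}_X^\mu,\mu)$, by Cauchy--Schwarz,
\begin{align*}
\left|\int \textbf{A}(G_n,h)f\,d\mu-\int \textbf{A}(G_n,\mathbb{E}(h|\mathcal{C}))f\,d\mu\right|
\le \left\|\textbf{A}(G_n,h)-\textbf{A}(G_n,\mathbb{E}(h|\mathcal{C}))\right\|_2\|f\|_2,
\end{align*}
which tends to $0$ as $n\to\infty$. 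Combining this with the assumed weak convergence $\textbf{A}(G_n,h)\overset{w}\to P(h)$ gives $\int\textbf{A}(G_n,\mathbb{E}(h|\mathcal{C}))f\,d\mu\to\int P(h)f\,d\mu$ for every $f$, i.e. $\textbf{A}(G_n,\mathbb{E}(h|\mathcal{C}))\overset{w}\to P(h)$.

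Next I would show $P(h)\in L^2(X,\mathcal{C},\mu)$. The key point is that $\mathcal{C}$ is $G$-invariant, so $\mathbb{E}(h|\mathcal{C})$ is $\mathcal{C}$-measurable and, since $g^{-1}\mathcal{C}=\mathcal{C}$, each function $x\mapsto \mathbb{E}(h|\mathcal{C})(gx)$ is again $\mathcal{C}$-measurable; hence every average $\textbf{A}(G_n,\mathbb{E}(h|\mathcal{C}))$ lies in the closed subspace $L^2(X,\mathcal{C},\mu)\subset L^2(X,\mathcal{B}_X^\mu,\mu)$. A norm-closed subspace of a Hilbert space is weakly closed, so the weak limit $P(h)$ of this sequence also lies in $L^2(X,\mathcal{C},\mu)$. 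Equivalently, one can test against $f\perp L^2(X,\mathcal{C},\mu)$: then $\int\textbf{A}(G_n,\mathbb{E}(h|\mathcal{C}))f\,d\mu=0$ for all $n$ (using that $\mathbb{E}(\cdot\,|\mathcal{C})$ is self-adjoint and $g$-invariance of $\mathcal{C}$), so $\int P(h)f\,d\mu=0$, forcing $P(h)\in L^2(X,\mathcal{C},\mu)$.

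I do not anticipate a serious obstacle here: the statement is essentially a soft functional-analytic consequence of the given norm-asymptotic equivalence together with weak closedness of the closed subspace $L^2(X,\mathcal{C},\mu)$. The only mild subtlety to be careful about is the use of $G$-invariance of $\mathcal{C}$ to guarantee that each translated conditional expectation remains $\mathcal{C}$-measurable, so that the whole sequence of averages stays inside the fixed subspace; once that is in place, everything follows from the two displayed estimates above.
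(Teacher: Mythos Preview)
Your proposal is correct and follows essentially the same route as the paper: the paper also uses Cauchy--Schwarz to bound $\left|\int(\textbf{A}(G_n,h)-\textbf{A}(G_n,\mathbb{E}(h|\mathcal{C})))f\,d\mu\right|$ by $\|\textbf{A}(G_n,h)-\textbf{A}(G_n,\mathbb{E}(h|\mathcal{C}))\|_2\|f\|_2$, concludes the weak convergence of $\textbf{A}(G_n,\mathbb{E}(h|\mathcal{C}))$ to $P(h)$, and then asserts $P(h)\in L^2(X,\mathcal{C},\mu)$. In fact you supply more detail than the paper does for this last membership claim, spelling out that $G$-invariance of $\mathcal{C}$ keeps each average inside the weakly closed subspace $L^2(X,\mathcal{C},\mu)$.
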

\begin{proof}For any $f\in L^{2}(X,\B_{X}^\mu,\mu)$, one has
	\begin{align*}&\limsup_{n\to\infty}\left|\int \left(\textbf{A}(G_n,h)-\textbf{A}(G_n,\mathbb{E}(h|\mathcal{C}))\right)fd\mu\right|\\
	\le&\limsup_{n\to\infty}\left\|\textbf{A}(G_n,h)-\textbf{A}(G_n,\mathbb{E}(h|\mathcal{C}))\right\|_{2}\|f\|_{2}\\
	=&0.
	\end{align*}
	This implies for any $f\in L^{2}(X,\B_{X}^\mu,\mu)$,
	\begin{align*}\lim_{n\to\infty}\int \textbf{A}(G_n,\mathbb{E}(h|\mathcal{C})) fd\mu&=\lim_{n\to\infty}\int \textbf{A}(G_n,h)fd\mu\\
	&=\int  P(h)fd\mu.
	\end{align*}
	Hence $\textbf{A}(G_n,\mathbb{E}(h|\mathcal{C}))\overset{w}\rightarrow  P(h)\in L^2(X,\mathcal{C},\mu).$ We end the proof of Lemma \ref{lemmaA}.
\end{proof}

\begin{lemma}\label{lemma1}
	Let $(X,\mathcal{B}_X^\mu,\mu,G)$ be a $G$-mps, $(G_n)_{n\ge 1}\subset F(G)$ and $h\in L^{2}(X,\B_{X}^\mu,\mu)$. If $\textbf{A}(G_n,h)\overset{w}\rightarrow P(h)\in L^{2}(X,\B_{X}^\mu,\mu)$, then we have the followings.
	\begin{itemize}
		\item[(1)] If there exists $M>0$ such that $h(x)\le M$ for $\mu$-a.e. $x\in X$, then\\ $\limsup_{n\to\infty}\textbf{A}(G_n,h)(x)\ge P(h)(x)$  for $\mu$-a.e. $x\in X$;
		\item[(2)] If  $h(x)>0$  for $\mu$-a.e. $x\in X $, then $P(h)(x)>0$ for $\mu$-a.e. $x\in X$. 
	\end{itemize}
\end{lemma}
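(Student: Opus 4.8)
The plan is to treat both parts via the same device: test the weak convergence $\textbf{A}(G_n,h)\overset{w}\to P(h)$ against indicator functions of suitable measurable sets, and then invoke Fatou-type arguments. For part (1), I would argue by contradiction. Suppose the set $E=\{x:\limsup_{n\to\infty}\textbf{A}(G_n,h)(x)<P(h)(x)\}$ has positive measure. Then for some rational $c$ and some $\varepsilon>0$ the set $E_{c,\varepsilon}=\{x: \limsup_n\textbf{A}(G_n,h)(x)\le c \text{ and } P(h)(x)\ge c+\varepsilon\}$ has positive measure; fix $A=E_{c,\varepsilon}$ with $\mu(A)>0$. On the one hand, testing weak convergence against $1_A$ gives $\int_A P(h)\,d\mu=\lim_{n\to\infty}\int_A \textbf{A}(G_n,h)\,d\mu$. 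On the other hand, since $h\le M$ a.e., the functions $\textbf{A}(G_n,h)$ are uniformly bounded above by $M$, so by the reverse Fatou lemma $\limsup_{n\to\infty}\int_A \textbf{A}(G_n,h)\,d\mu\le \int_A \limsup_{n\to\infty}\textbf{A}(G_n,h)\,d\mu\le c\,\mu(A)$. Combining, $\int_A P(h)\,d\mu\le c\,\mu(A)$, contradicting $P(h)\ge c+\varepsilon$ on $A$. This yields the claim.

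For part (2), I would first reduce to a bounded situation so that part (1) — or rather its underlying reverse-Fatou estimate — applies. Since $h>0$ a.e., the truncations $h\wedge M$ increase to $h$ pointwise as $M\to\infty$, and in particular for large $M$ the set $\{h\wedge M>0\}$ has full measure and $\|h-h\wedge M\|_2\to 0$; hence $\textbf{A}(G_n,h\wedge M)\overset{w}\to P(h\wedge M)$ for some limit, and one checks (by the $L^2$-continuity of the averaging operators and of weak limits) that $P(h\wedge M)\le P(h)$ a.e.\ and $P(h\wedge M)\to P(h)$ in $L^2$. Thus it suffices to prove positivity of $P(g)$ for bounded $g\ge 0$ with $g>0$ on a set of full measure. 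For such $g$, suppose $P(g)=0$ on a set $A$ with $\mu(A)>0$. Testing against $1_A$: $0=\int_A P(g)\,d\mu=\lim_n\int_A\textbf{A}(G_n,g)\,d\mu=\lim_n\frac{1}{|G_n|}\sum_{g'\in G_n}\int 1_A(x)g(g'x)\,d\mu$. Each integrand $\int_A g(g'x)\,d\mu(x)=\int g\cdot 1_{g'^{-1}A}\,d\mu\ge 0$, so all these averages are nonnegative; their limit being zero forces, along a subsequence, that the "mass" of $g$ over the translates $g'^{-1}A$ vanishes on average, which will contradict $\int g\,d\mu>0$ together with $\mu(A)>0$ unless $\mu(A)=0$.

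The delicate point in part (2) is making that last contradiction precise: knowing only a Cesàro average of the quantities $\int_{g'^{-1}A}g\,d\mu$ tends to zero does not immediately bound any single term, so I would instead integrate in a cleverer way — e.g.\ use that $\int_X\textbf{A}(G_n,g)\,d\mu=\int_X g\,d\mu$ is a fixed positive constant (by invariance of $\mu$), compare it with the contribution over $A$ versus $X\setminus A$, and exploit that $P(g)$, being the weak limit, satisfies $\int_X P(g)\,d\mu=\int_X g\,d\mu>0$ while $P(g)\ge 0$ a.e.\ (this nonnegativity itself follows from testing against $1_B$ for arbitrary $B$ and nonnegativity of $\textbf{A}(G_n,g)$). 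The cleanest route is probably: first establish $P(g)\ge 0$ a.e.\ and $\int P(g)\,d\mu=\int g\,d\mu$; then observe that if $P(g)$ vanished on a positive-measure set, one still could not conclude directly, so one applies part (1) to the bounded function $g$ to get $\limsup_n\textbf{A}(G_n,g)\ge P(g)$; the real positivity must come from an ergodic-theoretic input or from the hypothesis being used elsewhere in the paper (Theorem \ref{Thm-A} and ergodicity), so I would expect the actual proof to feed in $\mathcal{C}=\mathcal{P}_\mu(G)$ and ergodicity at this stage. I anticipate part (2) — specifically converting "weak limit has integral $\int g\,d\mu>0$ and is $\ge 0$" into "strictly positive a.e." — to be the main obstacle, resolved by combining the reverse Fatou bound of part (1) with the invariance identity $\int \textbf{A}(G_n,g)\,d\mu=\int g\,d\mu$.
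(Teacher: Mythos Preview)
Your argument for part (1) is correct and is essentially the paper's own proof: test weak convergence against the indicator of the bad set and use reverse Fatou (licensed by the uniform upper bound $M$) to reach a contradiction. The refinement with rationals $c$ and $\varepsilon$ is unnecessary but harmless.

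For part (2), however, there is a genuine gap. You correctly identify the difficulty: from $\lim_n\int_A\textbf{A}(G_n,g)\,d\mu=0$ one cannot directly extract a contradiction, and the observations $P(g)\ge 0$ a.e.\ and $\int P(g)\,d\mu=\int g\,d\mu>0$ are not enough to force $P(g)>0$ a.e. But your proposed way out --- invoking ergodicity or Theorem~\ref{Thm-A} --- is both unavailable (the lemma is stated for an arbitrary $G$-mps, with no ergodicity hypothesis and no reference to the Pinsker algebra) and unnecessary. The truncation manoeuvre is also problematic: you assert that $\textbf{A}(G_n,h\wedge M)$ converges weakly, but weak convergence of $\textbf{A}(G_n,h)$ does not automatically pass to $\textbf{A}(G_n,h\wedge M)$ along the same full sequence.

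The missing idea is a \emph{uniform} lower bound on each individual term $\int h(gx)\,1_B(x)\,d\mu(x)$, obtained directly from $h>0$ a.e.\ and the invariance of $\mu$. Set $B=\{P(h)\le 0\}$ and suppose $\mu(B)>0$. Since $h>0$ a.e., the sets $C_\epsilon=\{h<\epsilon\}$ satisfy $\mu(C_\epsilon)\to 0$ as $\epsilon\to 0$, so one may fix $\epsilon>0$ with $\mu(C_\epsilon)<\mu(B)$. Then for \emph{every} $g\in G$, using $\mu(g^{-1}C_\epsilon)=\mu(C_\epsilon)$,
\[
\int_B h(gx)\,d\mu(x)\ \ge\ \int_{B\setminus g^{-1}C_\epsilon} h(gx)\,d\mu(x)\ \ge\ \epsilon\,\mu(B\setminus g^{-1}C_\epsilon)\ \ge\ \epsilon\bigl(\mu(B)-\mu(C_\epsilon)\bigr)>0.
\]
This bound is independent of $g$, so averaging over $G_n$ preserves it, and passing to the weak limit gives $\int_B P(h)\,d\mu\ge \epsilon(\mu(B)-\mu(C_\epsilon))>0$, contradicting $P(h)\le 0$ on $B$. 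No truncation, no ergodicity, and no appeal to part (1) are needed.
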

\begin{proof} 	
	(1)  Let $D=\{x\in X:\limsup_{n\to\infty} \textbf{A}(G_n,h)(x)< P(h)(x)\}$. If $\mu(D)>0$, then
	\begin{align*}\limsup_{n\to\infty}\int \textbf{A}(G_n,h)(x)1_D(x)d\mu(x)\le& \int \left(\limsup_{n\to\infty}\textbf{A}(G_n,h)(x)\right)1_D(x)d\mu(x)\\
	<&\int P(h)(x)1_D(x)d\mu(x).
	\end{align*}	
	This is impossible since $\textbf{A}(G_n,h)\overset{w}\rightarrow P(h)$. Hence $\mu(D)=0$ and then (1) is valid.   
	
	(2) 	Let $B=\{x:P(h)(x)\le 0\}$. If $\mu(B)>0$, by the assumption $h(x)>0$  for $\mu$-a.e. $x\in X $, there exists $ \epsilon >0$ such that $\mu(C_{\epsilon})<\mu(B)$, where $C_{\epsilon}=\{x:h(x)< \epsilon\}$. For all $ g\in G$, $$\int h(gx)1_{B}(x)d\mu(x) \ge \int_{ B\setminus (g^{-1}C_\epsilon)} h(gx)d\mu(x) \ge\epsilon \mu(B\setminus (g^{-1}C_\epsilon))\ge \epsilon (\mu(B)-\mu(C_{\epsilon})).$$ So $$\int \textbf{A}(G_n,h)1_{B}d\mu \ge \epsilon (\mu(B)-\mu(C_{\epsilon})).$$ 
	Hence 
	$$\int P(h)1_{B}d\mu=\lim_{n\to\infty}\int \textbf{A}(G_n,h)1_{B}d\mu\ge  \epsilon (\mu(B)-\mu(C_{\epsilon}))>0,$$
	which contradicts the fact that $\int P(h)1_{B}d\mu\leq 0$. So $\mu(B)=0$, that is, $P(h)>0$ for $\mu$-a.e. $x \in X$.
\end{proof}

\subsection{Proof of Theorem \ref{thm1} and Theorem \ref{thm2}}    
   With the help of the results in Section 4.1, we are able to prove Theorem \ref{thm1}.
    \begin{proof}[Proof of Theorem \ref{thm1}]
    		Let $(X,G)$ be a $G$-tds, $(G_n)_{n\ge 1}\subset F(G)$ and $\mu\in\M^e(X,G)$. Assume that  $h_\mu(G)>0$ and $\lim_{n\to\infty}|G_n|=\infty$. Let  $\mu=\int \mu_yd\mu(y)$ be the disintegration of $\mu$ with respect to $\mathcal{P}_{\mu}(G)$. We are going to show that
    		for $\mu$-a.e. $x\in X$, there is $\delta_x>0$ such that  $\mu_x\times\mu_x(W_{(G_n)_{n\ge 1},\delta_x})=1$.
    		
    	Let $\lambda=\mu \times_{\mathcal{P}_\mu(G)} \mu$ and $h(x_1,x_2)=d(x_1,x_2)$. By Lemma \ref{3}, $\lambda(\Delta_{X})=0$, where $\Delta_{X}=\{(x,x): x\in X\}$, which implies $h(x_1,x_2)>0$   for $\lambda$-a.e. $(x_1,x_2)\in X \times X$.  Since $(\textbf{A}(G_n,h))_{n\ge 1}$ are  bounded in $L^{2}(X\times X,\B_{X\times X}^{\lambda},\lambda)$, there is a subsequence $(G_{n_k})_{k\ge1}$ of $(G_n)_{n\ge 1}$ such that 
    	\begin{equation}\label{11}
    	  	\textbf{A}(G_{n_k},h)\overset{w}\rightarrow P(h)\in L^2(X\times X,\B_{X\times X}^{\lambda},\lambda).
    	\end{equation}
    	 By Theorem \ref{Thm-A}, one has \begin{equation}\label{88}
    	\lim_{n\to\infty}\left\|\textbf{A}(G_n,h)-\textbf{A}(G_n,\mathbb{E}(h|\mathcal{P}_\lambda(G)))\right\|_2=0.
    	 \end{equation}
    	 Appling Lemma \ref{lemmaA} to \eqref{11} and \eqref{88}, we obtain that 
    	$$	P(h)\in L^2(X\times X,\mathcal{P}_\lambda(G),\lambda).$$
    	Moreover, accroding to Lemma \ref{7}, \begin{equation*}
    	\mathcal{P}_\lambda(G)=\pi^{-1}(\mathcal{P}_\mu(G)),
    	\end{equation*}
      where $\pi:X \times X \rightarrow X $ be the projection to the first coordinate.
    Hence one has
    	  \begin{equation*}
    	P(h)\in L^2(X\times X,\pi^{-1}(\mathcal{P}_\mu(G)),\lambda).
    	\end{equation*}
    	By Lemma \ref{lemma1} (2), one has $P(h)(x_1,x_2)>0$ for $\lambda$-a.e. $(x_1,x_2)\in X \times X$ and hence by Lemma \ref{3}
    	 for $\mu$-a.e. $x\in X$,  
    	$$\delta_x:=\frac{1}{2}\int P(h)d\mu_x\times\mu_x>0.$$  
    	As $P(h)\in L^2(X\times X,\pi^{-1}(\mathcal{P}_\mu(G)),\lambda)$, it follows that  for $\lambda$-a.e. $(x_1,x_2)\in X\times X$,
    	\begin{equation*}
    	P(h)(x_1,x_2)=\mathbb{E}(P(h)|\pi^{-1}(\mathcal{P}_\mu(G)))(x_1,x_2).
    	\end{equation*}
    Then we deduce that for	 $\mu$-a.e. $x \in X$, for  $\mu_x\times\mu_x$-a.e. $(x_1,x_2)\in X\times X$,
    	\begin{equation}\label{55}
    P(h)(x_1,x_2)=\int P(h)d\mu_x\times\mu_x.
    \end{equation}
    By Lemma \ref{lemma1} (1), for  $\mu$-a.e. $x \in X$, for  $\mu_x\times\mu_x$-a.e. $(x_1,x_2)\in X \times X$,
    	$$\limsup_{k\to\infty}\frac{1}{|G_{n_k}|}\sum_{g\in G_{n_k}}d(gx_1,gx_2)\ge P(h)(x_1,x_2)\overset{\eqref{55}}=\int P(h)d\mu_x\times\mu_x> \delta_x.$$
    	 Hence for $\mu$-a.e. $x \in X$,
    	$$\mu_x\times\mu_x(W_{(G_{n})_{n\ge 1},\delta_x})=1.$$
    	We finish the proof of Theorem \ref{thm1}.
    \end{proof}




Now we prove Theorem \ref{thm2}. 	
\begin{proof}[Proof of Theorem \ref{thm2}]Put
	$$Prox((G_n)_{n\ge 1})=\{(x,y)\in X\times X:\liminf_{n\to\infty}\frac{1}{|G_n|}\sum_{g\in G_n}d(gx,gy)=0\}.$$
For each $x\in X$,
$$W_{S,(G_n)_{n\ge 1}}(x)\times W_{S,(G_n)_{n\ge 1}}(x)\subset Prox((G_n)_{n\ge 1}).$$ By condition \eqref{Condition} for $\mu$-a.e. $x\in X$,
	 $$\overline{Prox((G_n)_{n\ge 1})\cap (supp(\mu_x)\times supp(\mu_x))}=supp(\mu_x)\times supp(\mu_x).$$
	  Hence for $\mu$-a.e. $x\in X$, ${Prox((G_n)_{n\ge 1})\cap (supp(\mu_x)\times supp(\mu_x))}$ is a dense $G_\delta$ subset of $supp(\mu_x)\times supp(\mu_x)$.

	For $\mu$-a.e. $x\in X$, let $\delta_x>0$ be as in Theorem \ref{thm1}. Then for $\mu$-a.e. $x\in X$, $\mu_x\times\mu_x(W_{G_{n},\delta_x})=1.$ By Lemma \ref{3}, we know for $\mu$-a.e. $x\in X$, $W_{G_{n},\delta_x}\cap  (supp(\mu_x)\times supp(\mu_x))$ is a dense $G_\delta$ subset of $supp(\mu_x)\times supp(\mu_x)$.
	
	Therefore for $\mu$-a.e. $x\in X$,
	$$W_{G_{n},\delta_x}\cap Prox((G_n)_{n\ge 1})\cap (supp(\mu_x)\times supp(\mu_x))$$
	is a dense $G_\delta$ subset of $supp(\mu_x)\times supp(\mu_x)$. By Theorem \ref{Mycielski} and Lemma \ref{3}, for $\mu$-a.e. $x\in X$, there is a dense Mycielski $((G_n)_{n\ge 1},\delta_x)$-mean scrambled subset $K_x$ of $supp(\mu_x)\times supp(\mu_x)$. This ends the proof of Theorem \ref{thm2}.
	\end{proof}

\section*{Acknowledgement}
 C. Liu is supported in part by NNSF of  China (12090012). R. Xiao is supported by NNSF of China (11971455, 12031019, 12090012). L. Xu
 is supported in part by NNSF of China (12090012,11871188) and the USTC Research Funds of the Double First-Class Initiative.


\begin{thebibliography}{99}
    	\bibitem{BGKM}F. Blanchard, E. Glasner, S. Kolyada and A. Maass,
{\it  On Li-Yorke pairs.}
J. Reine Angew. Math., 
\textbf{547} (2002), 51--68.

\bibitem{DD} A. Danilenko, {\it Entropy theory from the orbital point of view.} Monatsh. Math., \textbf{134}(2) (2001), 121--141.

\bibitem{DG} A. H. Dooley and V. Ya. Golodets, {\it The spectrum of completely positive entropy actions of countable amenable groups.} J. Funct. Anal., \textbf{196}(1) (2002), 1--18.

\bibitem{D} T.~Downarowicz, \emph{Positive topological entropy implies chaos DC2.}
Proc. Amer. Math. Soc., \textbf{142} (2014), 137--149.
 
\bibitem{FW96} H. Furstenberg and B. Weiss,
\emph{A mean ergodic theorem for $(1/N)\sum\sp N\sb {n=1}f(T\sp nx)g(T\sp {n\sp 2}x)$},
Convergence in ergodic theory and probability (Columbus, OH, 1993), 193--227,
Ohio State Univ. Math. Res. Inst. Publ., 5, de Gruyter, Berlin, 1996.
 
 \bibitem{GJ} F. Garcia-Ramos and L. Jin,
 \emph{Mean proximality and mean Li-Yorke chaos},
Proc. Amer. Math. Soc., {\bf 145}(7) (2017), 2959--2969.

\bibitem{book:HK} B. Host and B. Kra, \emph{Nilpotent structures in ergodic theory.} Mathematical Surveys and Monographs, \textbf{236}. American Mathematical Society, Providence, RI, 2018. X+427 pp.

    	\bibitem{HLY14}W. Huang, J. Li and X. Ye,
{\it Stable sets and mean Li-Yorke chaos in positive entropy systems.}
J. Funct. Anal.,
\textbf{266} (2014), 3377--3394.
    	
    	\bibitem{HLY-Arxiv}W. Huang, J. Li and  X. Ye,
{\it  Positive entropy implies chaos along any infinite sequence.}
arXiv:2006.09601, 2020.
    	

    	
    	
    	\bibitem{HJ}  W. Huang and L. Jin, 
{\it Stable sets and mean Li-Yorke chaos in positive entropy actions of bi-orderable amenable groups.} 
 Ergodic Theory Dynam. Systems, 
\textbf{36}(8) (2016), 2482--2497.
    	
    	\bibitem{HXY}  W. Huang, L. Xu and Y. Yi,
{\it Asymptotic pairs, stable sets and chaos in positive entropy systems.} 
J. Funct. Anal.,
\textbf{268}(4) (2015), 824--846.
    	
\bibitem{HY02}W. Huang and X. Ye,
{\it Devaney’s chaos or 2-scattering implies Li-Yorke’s chaos.}
 Topolgy Appl.,
{\bf 117}(3) (2002), 259--272.

   	\bibitem{HYZ} W. Huang, X. Ye and G. Zhang, 
{\it Local entropy theory for a countable discrete amenable group action.}
J. Funct. Anal.,
\textbf{261}(4) (2011), 1028--1082.
    	
\bibitem{I} A. Iwanik,
{\it Independence and scrambled sets for chaotic mappings.} 
The mathematical heritage of C. F. Gauss, 372--378, World Sci. Publ., River Edge, NJ, 1991. 	



\bibitem{KL07}  D. Kerr and H. Li, \emph{Independence in topological and C$^*$-dynamics},
Math. Ann., \textbf{338}(4) (2007), 869--926.

\bibitem{KL13} D. Kerr and H. Li, \emph{Combinatorial independence and sofic entropy},
Commun. Math. Stat., \textbf{1} (2013) 213--257.

    	\bibitem{LQ}J. Li and Y. Qiao,
{\it  Mean Li-Yorke chaos along some good sequences.}
Monatsh Math, 
\textbf{186} (2018), 153--173.

\bibitem{LR} H. Li and Z. Rong, {\it Combinatorial independence and naive entropy,} to appear in Ergodic Theory
Dynam. Systems, arXiv: 1901.02657.

    	
    	\bibitem{LY}J. Li and X. Ye,
{\it  Recent development of chaos theory in topological dynamics.}
Acta Math. Sin., 
\textbf{32}(1) (2016), 83--114.
    	
  	\bibitem{L}E. Lindenstrauss and B. Weiss, {\it Mean topological dimension.}Israel J. Math., \textbf{115} (2000) 1--24.
    	
    	\bibitem{M}J. Mycielski,
 {\it  Independent sets in topological algebras.}
Fund. Math.,
\textbf{55} (1964), 139--147.
    	
 \bibitem{RW}   D. Rudolph and B. Weiss, {\it Entropy and mixing for amenable group actions.} Ann. of Math., (2) \textbf{151}(3)
    (2000), 1119--1150.	
    	
    	
    	\bibitem{R}V. A. Rokhlin,
{\it On the fundamental ideas of measure theory.}
Amer. Math. Soc., 
Translation  (1952). no. 71, 55 pp. 


	\bibitem{S} A.V. Safonov, {\it Information pasts in groups.} Izv. Akad. Nauk SSSR Ser. Mat.,  \textbf{47}(2) (1983), 421--426.  
    	
		
		\bibitem{WG}Z. Wang and G. Zhang,
{\it  Chaotic behavior of group actions.}
Dynamics and numbers, 299--315,
Contemp. Math., 669, Amer. Math. Soc., Providence, {\bf RI}, 2016.
		
    	
    \end{thebibliography}
\end{document}